\documentclass[10.5pt,letterpaper,leqno]{article}
\usepackage[letterpaper, hmargin=2cm, vmargin={2cm, 2cm}]{geometry}
\linespread{1.6}
\usepackage{myDefs}
\usepackage[symbol]{footmisc}

\begin{document}
\title{Frequently oscillating families related to subharmonic functions}
\author{Adi Gl\"ucksam}
\maketitle
\begin{abstract}
The goal of this note is to extend the result bounding from bellow the minimal possible growth of frequently oscillating subharmonic functions to a larger class of functions that carry similar properties. We refine and find further applications for the technique presented by Jones and Makarov in their celebrated paper \cite{JoneMakarov1995}.
\end{abstract}
\section{Introduction}
In \cite{MyJM2020}, the author bounded from below the minimal possible growth of frequently oscillating subharmonic functions. We used a technique originally presented by Jones and Makarov in \cite{JoneMakarov1995}. Here we refine those techniques more and harness similar results for a larger class of functions by identifying the required properties to show the same lower bound. 

\subsection{A family of functions related to subharmonic functions}
Let $\psi:[0,1]\rightarrow\R_+$ be a monotone increasing continuous function, with $\limit t {0^+}\psi(t)=0$. {We will denote by $m_d$ the $d$-dimensional Lebesgue measure, and let $\mu$ be a measure of density $\omega$ for some non-negative function $\omega:\R^d\rightarrow[0,\infty)$, i.e. $d\mu(x):=\omega(x)dm_d(x)$.}

We define the collection $\mathcal M_\psi(\R^d)$ to be all Borel measures satisfying that for every ball, $B\subset\R^d$, and Borel-measurable set $E\subset B$
\begin{equation}\label{eq:measure}
\frac{\mu(E)}{\mu(B)}\le \psi\bb{\frac{m_d(E)}{m_d(B)}}.
\end{equation}
One example of such measure is defined by the density function $\omega(x):=\norm x{}^{\alpha}$ for $\alpha>0$ with the function $\psi(t)=4^d\cdot 6^\alpha\cdot t$. In fact, if a measure is not absolutely continuous with respect to Lebesgue's measure, then it cannot satisfy Condition (\ref{eq:measure}) since we can find a set $E$ so that the left hand side of the inequality is positive, while the right hand side is zero. We {may therefore assume without loss of generality that $\mu$ is defined as above}, making Condition (\ref{eq:measure}) well defined. However, not every measure, which is absolutely continuous with respect to Lebesgue's measure, satisfies (\ref{eq:measure}) for some function $\psi$. For example, define the function $\omega:\R^2\rightarrow\R^2$ by
$$
\omega(x)=\sumit k 1 \infty \indic{B\bb{2\pi k,\frac{\pi}{2k}}}(x)\cdot\cos\bb{k\cdot \abs {x-2\pi k}}+e^{-\abs x},
$$
and consider the measure induced by the function $\omega$, i.e.,
$$
\mu(E):=\integrate E{}{\omega(x)}{m_2(x)}.
$$
This measure is absolutely continuous with respect to Lebesgue's measure, while, for every $k\in\N$
\begin{align*}
\frac{\mu\bb{B\bb{2\pi k,\frac \pi{2k}}}}{\mu\bb{B\bb{2\pi k,1}}}&=\frac{\integrate{B\bb{2\pi k,\frac \pi{2k}}}{}{\cos\bb{k\cdot \abs {x-2\pi k}}+e^{-\abs x}}{m_2(x)}}{\integrate{B\bb{2\pi k,\frac \pi{2k}}}{}{\cos\bb{k\cdot \abs {x-2\pi k}}}{m_2(x)}+\integrate{B\bb{2\pi k,1}}{}{e^{-\abs x}}{m_2(x)}}\\
&\ge \frac{\pi-2}{\pi-2+\pi\cdot e^{-\pi k}}=1-\Theta(e^{-\pi k})\gg \frac{\pi^2}{4k^2}=\frac{m_2\bb{B\bb{2\pi k,\frac \pi{2k}}}}{m_2\bb{B\bb{2\pi k,1}}}\rightarrow 0 \text{ as } k\rightarrow \infty.
\end{align*}
We see that no matter which $\psi$ we choose, Condition (\ref{eq:measure}) would never hold, and so the class of measures for which Condition (\ref{eq:measure}) holds is a subset of the class of measures which are absolutely continuous with respect to Lebesgue's measure.

Given constants $A,B\ge 1$, and a measure $\mu\in\mathcal M_\psi(\R^d)$, we denote by $\mathcal F(A,B,\mu)$ the collection of upper semi-continuous functions $u:\R^d\rightarrow\R$ so that
\begin{itemize}
\item $u$ satisfies a weak maximum principle: for every $K\subset\R^d$ convex
\begin{equation}\label{eq:max_pric}
\underset {x\in K}\sup\; u(x)\le A\cdot \underset{x\in\partial K}\sup\; u(x).
\end{equation}
\item $u$ satisfies a weighted mean-value inequality: For every $r>r_0$,
\begin{equation}\label{eq:mean_val}
u(x)=ess\limitsup y x u(y)\le B\cdot\frac1{\mu(B(x,r))}\underset {B(x,r)}\int u d\mu.
\end{equation}
\end{itemize}
\begin{rmk}
We only require the maximum principle to hold in convex sets, because we only use it in cubes anyways, while it is easier to verify on convex sets.
\end{rmk}
\subsubsection {Basic Properties of the set $\mathcal F(A,B,\mu)$:} The following properties are simple to verify-
\begin{enumerate}[label=$\bullet$]
\item For every $A,B\ge 1$ and every measure $\mu$, $\mathcal F(A,B,\mu)$ contains all the non-negative constants. Note that if $A>1$, then it cannot contain negative constants at all.
\item For every $u\in \mathcal F(A,B,\mu)$ and constants $\lambda,\tau>0$ we have $\lambda\cdot u+\tau\in\mathcal F(A,B,\mu)$.
\item $\mathcal F(A,B,\mu)$ is closed under maximum operation, i.e., for every $u,v\in\mathcal F(A,B,\mu)$, $\max\bset{u,v}\in \mathcal F(A,B,\mu)$.
\end{enumerate}

\subsubsection{Motivating Examples}\label{subsubsec:examples}
\begin{enumerate}[label=(\roman*)]
\item Note that if $\mu$ is Lebesgue's measure, and $A=B=1$, then subharmonic functions all belong to $\mathcal F(1,1,m_d)$.
\item A second large class of examples arise from the following observation:
\begin{obs}\label{obs:Harnack}
Let $u:\R^d\rightarrow\R_+$ be a non-negative function satisfying Harnack's inequality, i.e., there exists a constant $c$ so that for every ball $B$
$$
\underset{B}\sup\; u\le c\cdot\underset{B}\inf\; u.
$$
Then $u\in\mathcal F(c,c,m_d)$.
\end{obs}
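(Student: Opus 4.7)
The plan is to verify the two defining properties of $\mathcal{F}(c,c,m_d)$ — the weighted mean-value inequality (\ref{eq:mean_val}) and the weak maximum principle (\ref{eq:max_pric}) — directly from Harnack's inequality, by applying it to balls tailored to each situation.

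For (\ref{eq:mean_val}), fix $x \in \R^d$ and $r > 0$. Harnack on the ball $B(x,r)$ immediately yields
$$u(x) \le \sup_{B(x,r)} u \le c \cdot \inf_{B(x,r)} u \le \frac{c}{m_d(B(x,r))} \int_{B(x,r)} u \, dm_d,$$
since the infimum of a non-negative function is bounded above by its average. This holds for every $r>0$, so no positive lower bound $r_0$ is needed, and one obtains the required inequality with $B = c$ and $\mu = m_d$.

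For (\ref{eq:max_pric}), let $K$ be a bounded convex set — cubes suffice for the paper's needs, per the remark above — and fix any $x_0 \in K$. Set $R = \mathrm{diam}(K)$, so that $K \subset \overline{B(x_0, R)}$. Harnack on this enclosing ball gives
$$\sup_K u \le \sup_{B(x_0, R)} u \le c \cdot \inf_{B(x_0, R)} u \le c \cdot \inf_{\partial K} u \le c \cdot \sup_{\partial K} u,$$
using $\partial K \subset B(x_0, R)$ in the penultimate step. Taking the supremum on the left over $K$ produces the maximum principle with $A = c$.

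The only point requiring care is the essential-limsup identity built into the definition of $\mathcal{F}$: this does not follow from Harnack alone, but Harnack tightly controls the local oscillation of $u$, and combined with the standing upper semi-continuity assumption it is compatible with (and essentially forces) that identity. The two main inequalities otherwise follow almost immediately from Harnack, so the observation is indeed a straightforward verification rather than an argument with a real obstacle.
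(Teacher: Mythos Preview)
Your proof is correct and follows essentially the same route as the paper: apply Harnack on a ball containing $K$ (and a boundary point of $K$) for the weak maximum principle, and apply Harnack on $B(x,r)$ together with the trivial bound $\inf \le$ average for the mean-value inequality. The only cosmetic difference is that the paper takes a ball compactly containing $K$ rather than the ball of radius $\mathrm{diam}(K)$, which sidesteps the borderline case where $\partial K$ meets the sphere; your argument is easily fixed by enlarging $R$ slightly.
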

We include the proof here for completeness:
\begin{proof}
$\bullet$ To see that $u$ satisfies the weak maximum principle, let $K$ be a convex set and let $B$ be a ball large enough, which compactly contains the set $K$. Then following Harnack's inequality,
$$
\underset{x\in K}\sup\; u(x)\le \underset{x\in B}\sup\; u(x)\le c\cdot \underset{x\in B}\inf\; u(x)\le c\underset{x\in\partial K}\sup u(x).
$$
$\bullet$ To see that $u$ satisfies a weighted mean value theorem with Lebesgue's measure, note that for every ball $B$ containing $x$, 
$$
u(x)\le\underset{y\in B}\sup\; u(y)\le c\cdot{\underset{y\in B}\inf\; u(y)}=\frac c{m_d(B)}\integrate B{}{\underset{y\in B}\inf\; u(y)}{m_d}\le \frac c{m_d(B)}\integrate B{}{u(y)}{m_d}.
$$
\end{proof}
\item Another large class of examples of such functions are sub-solutions of quasilinear elliptic equations (called $A$-subharmonic functions).
\end{enumerate}
These and other examples will be discussed in Section \ref{sec:examples} more extensively.

\subsection{Frequently oscillating functions}
We next consider a sub-collection of functions in the family $\mathcal F(A,B,\mu)$ which satisfy additional requirements.

A cube $I$ is called a {\it basic cube} if $I=\prodit j 1 d [n_j,n_j+1)$ for $n_1,\cdots,n_d\in\Z$, i.e., $I$ is a half open half closed cube with edge length one, whose vertices lie on the lattice $\Z^d$. Let $N\in \N_{even}, N\gg 1$ and let $Q=\left[-\frac N2,\frac N2\right]^d$.

Given a function $u\in\mathcal F(A,B,\mu)$ and a basic cube $I$, we consider the properties: 
\begin{center}
\begin{minipage}[b]{.3\textwidth}
\vspace{-\baselineskip}
\begin{equation*}
\tag{P1}\underset{x\in I}\sup\; u(x)\ge 1 \label{eq:max_el}
\end{equation*}
\end{minipage}%
\hspace{0.2\textwidth}
\begin{minipage}[b]{.4\textwidth}
\vspace{-\baselineskip}
\begin{equation*}
\tag{P2} \frac{\mu\bb{I\cap Z_u}}{\mu(I)}\ge1-\Delta \label{eq:zero_el}
\end{equation*}
\end{minipage}
\end{center}
where $\Delta=\Delta(A,B,d,\mu)$ is a small enough constant, and $Z_u:= \bset{u\le 0}$. If a basic cube $I$ satisfies both properties, we say that {\it the function $u$ oscillates in $I$}. Otherwise, we say $I$ is a {\it rogue basic cube}.

Given a monotone non-decreasing function $f\,(t)\le t^d$, a function $u\in\mathcal F(A,B,\mu)$, and a cube $Q\subset\R^d$ with edge length $\ell(Q)$, we let
$$
\gamma_f^u(Q)=\gamma(Q)=\frac{\#\bset{I\subset Q ,I\text{ is a rogue basic cube}}}{f(\ell(Q))}.
$$
We say $u$ is {\it $f$-oscillating in $Q$} if $u$ is defined in a neighbourhood of $Q$, and $\gamma(Q)<1$. We say $u$ is {\it $f$-oscillating} if
$$
\limitsup N\infty\gamma([-N,N]^d)<1.
$$

{The study of estimates on the growth rate of subharmonic functions with a given well distributed zero set is a fundamental question dating back to the works of Jensen, Nevalinna, Phragm\'en and Lindel\"of, and others. However, frequently oscillating functions also have the requirement of having some uniform lower bound near its zero set. This special combination arose in \cite {Us2017}, while investigating the lower bound in Theorem 1A. In this paper we studied the minimal possible growth of entire functions in the support of translation invariant probability measures. Given such a measure, it follows from the point-wise ergodic theorem, that almost every entire function must satisfy that the subharmonic function $u_f(z):=\log\abs{f(z)}$ satisfies conditions $(P_1)$ and $(P_2)$ above when the number of rogue basic cube is proportional to Lebesgue's measure and the basic cube $I$ is replaced by some cube of fixed edge-length. Pursuing a more accurate lower bound and extending this for different asymptotic of the number of `rogue' basic cubes inspired \cite{MyJM2020}. The work presented here is a further extension to a more general setup.}

\subsection{The result and a word about the proof}
In this note we prove a lower bound on the minimal possible growth of frequently oscillating functions in $\mathcal F(A,B,\mu)$, measured by the asymptotic behavior of the function $R\mapsto M_u(R):=\underset{z\in B(0,R)}\sup\; u(z)$:
\begin{thm}\label{thm:quasi-lin-subsol}
Every $f$-oscillating function $u\in\mathcal F(A,B,\mu)$ must satisfy
$$
\limitinf R\infty \frac{\log(M_u(R))}{\frac{R}{1+\bb{\frac{f(R)}{R}}^{\frac1{d-1}}}\log^{\frac d{d-1}}\bb{2+\frac{f(R)}{R}}}>0,
$$
provided that $f(t)\le  c_0\cdot t^d$ for all $t$ large, where $c_0$ is a {small} constant, which depends on $A,B,d$ and the function $\psi$ {appearing in Condition \eqref{eq:measure}.}
\end{thm}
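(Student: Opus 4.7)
The plan is to adapt the Jones--Makarov growth scheme from \cite{MyJM2020} to the general class $\mathcal{F}(A,B,\mu)$, re-deriving every constant with explicit dependence on $A$, $B$, $d$ and $\psi$. The argument has two ingredients: a local amplification lemma, built from the weighted mean-value inequality (\ref{eq:mean_val}) combined with Condition (\ref{eq:measure}), and an iteration of that lemma along a geometrically growing chain of scales.

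For the amplification, fix a basic cube $I$ in which $u$ oscillates and let $x_0\in\overline{I}$ be a maximizer of $u|_{\overline{I}}$, which exists because $u$ is upper semi-continuous; by (P1), $u(x_0)\ge 1$. Applying (\ref{eq:mean_val}) at $x_0$ to a ball $B(x_0,r)$ with $r\gg 1$ gives
$$
1\;\le\;u(x_0)\;\le\;\frac{B}{\mu(B(x_0,r))}\int_{B(x_0,r)} u\, d\mu\;\le\;B\cdot\frac{\mu(B(x_0,r)\cap\{u>0\})}{\mu(B(x_0,r))}\cdot\sup_{B(x_0,r)} u,
$$
using $u\le 0$ on $Z_u$ to discard the negative part of the integral. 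The $\mu$-density on the right can be bounded above by $\Delta + \psi(c\cdot f(r)/r^d)$: the non-rogue basic sub-cubes of $B(x_0,r)$ contribute at most $\Delta$ by property (P2) and additivity, while the union $R_r$ of rogue basic sub-cubes has Lebesgue density at most $c\cdot f(r)/r^d$, so (\ref{eq:measure}) yields $\mu(R_r)/\mu(B(x_0,r))\le\psi(c f(r)/r^d)$. Choosing $\Delta$ and $c_0$ small enough (depending on $A,B,d,\psi$) keeps this density bounded away from $1$, producing the amplification
$$
\sup_{B(x_0,r)} u\;\ge\;\eta(r)\cdot\sup_I u,\qquad \eta(r)\;=\;\frac{1}{B\bigl(\Delta+\psi(c f(r)/r^d)\bigr)}.
$$
The weak maximum principle (\ref{eq:max_pric}) on convex sets then transfers the bound from a ball to a concentric cube at a multiplicative cost of $A$.

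Next, I would iterate the amplification along a chain of cubes $Q_0\subset Q_1\subset\cdots\subset Q_k\subset B(0,R)$ whose edge lengths increase by a factor $\rho>1$. At each scale, the $f$-oscillation hypothesis together with the smallness of $c_0$ ensures that the density of rogue basic cubes inside $Q_j$ stays below the threshold needed to locate a sub-cube of $Q_j$ to which the amplification lemma applies. Multiplying the amplifications across $k\sim\log R/\log\rho$ scales yields
$$
\log M_u(R)\;\gtrsim\;k\cdot\log\frac{\eta(\rho)}{A}.
$$
The stated denominator then follows by optimizing $\rho$ against $f(R)/R$: the scale-jump ratio must be large enough that $\eta(\rho)>A$, but small enough that sufficiently many non-rogue sub-cubes remain available at each level. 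This balance, carried out exactly as in \cite{JoneMakarov1995,MyJM2020}, produces the exponent $1/(d-1)$ and the logarithmic factor $\log^{d/(d-1)}(2+f(R)/R)$.

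The main obstacle is carrying a general $\psi$ through the optimization. In \cite{MyJM2020} the measure is Lebesgue's and $\psi$ is linear, so $\eta(r)$ and the optimal ratio $\rho$ admit closed-form expressions; here, because $\psi$ is only assumed continuous with $\psi(0^+)=0$, one must verify that the asymptotic shape of the bound is preserved for every admissible $\psi$ and that all constants can be absorbed into the dependence of $c_0$ on $\psi$. A secondary care-point is the uniform counting of rogue basic cubes at intermediate scales so that the iteration closes; this is precisely why the hypothesis $f(t)\le c_0\cdot t^d$ is imposed with $c_0$ small depending on $\psi$. Once the amplification lemma is established and the $\psi$-dependence in the optimization is tracked, the remaining steps reduce to the iteration scheme already carried out in \cite{MyJM2020}.
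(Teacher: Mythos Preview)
Your local amplification is on the right track, but two gaps prevent the scheme from reaching the stated bound.

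First, the density estimate is not justified as written. The $f$-oscillation hypothesis bounds the \emph{total} number of rogue basic cubes in $[-N/2,N/2]^d$ by $f(N)$; it does not guarantee that an arbitrary sub-ball $B(x_0,r)$ contains at most $c\,f(r)$ of them, so the bound $\psi(c\,f(r)/r^d)$ for the rogue contribution is unfounded. In the paper this local control is supplied by Lemma~\ref{lem:combinatorial}, which for each relevant boundary point $x$ selects a scale $j(x)\ge r_0$ with rogue-cube density at most $\varepsilon$ in $Q_{j(x)}(I(x))$; that selection is a genuine stopping-time step, not a consequence of the global count.

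Second, and more fundamentally, a geometric chain $Q_0\subset Q_1\subset\cdots\subset Q_k$ with $\ell(Q_{j+1})=\rho\,\ell(Q_j)$ has only $k\sim\log R/\log\rho$ terms, and since each amplification factor $\eta$ is bounded (by $1/(B\Delta)$ at best), the product yields merely $\log M_u(R)\gtrsim c(\rho)\log R$. No optimization over $\rho$ can upgrade this to the theorem's bound, which is essentially \emph{linear} in $R$ when $f$ is small. The paper instead runs an \emph{arithmetic}-type chain of concentric cubes $Q_{k_1}(I)\subset Q_{k_2}(I)\subset\cdots$ about a fixed basic cube $I$, with annular gaps of bounded adaptive width: at each step the weak maximum principle locates $x_j\in\partial Q_{k_j}$ with $u(x_j)\ge A^{-1}M_u(Q_{k_j})$, and the mean-value inequality is applied on a ball $B(x_j,r_j)\subset Q_{k_{j+1}}\setminus Q_{k_{j-1}}$. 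The length of this chain --- and with it the exponent $1/(d-1)$ and the factor $\log^{d/(d-1)}$ --- comes out of Lemma~\ref{lem:combinatorial}, whose proof is a dyadic stopping-time construction combined with the maximal inequality and a Jensen optimization over an adaptive annulus-width function $M(k)$. The ``balance'' you invoke from \cite{JoneMakarov1995,MyJM2020} is precisely this construction on unit-thickness annuli, not a choice of a single ratio $\rho$; your outline bypasses exactly the lemma that carries the theorem.
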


{We would have loved to provide a construction showing this result is optimal for at least some subclass which is not subharmonic functions. The most natural candidate is, off course, $\mathcal A$-subharmonic functions. However, the construction for subharmonic functions heavily relies on the connection between subharmonic functions and harmonic measures. While there have been numerous attempts to extend these for the more general setup of $p$-harmonic measures, these attempts were not successful. If you aim at a measure that recovers a $p$-harmonic function, which is what we need, the measure you get is not liner, i.e., not a measure (see e.g. \cite{Llorente2005}). A completely new idea for constructing optimal examples is therefore required, which, after consulting with experts in the field, seemed extremely challenging.}

\subsubsection{The idea of the proof}
The idea of the proof of Theorem \ref{thm:quasi-lin-subsol} is to use a similar stopping time argument, as the one appearing in Jones and Makarov's paper, \cite{JoneMakarov1995}. However some small modifications were required for the proof to work, for example due to the presence {of} the constants $A,\; B$ in the weak maximum principle (Condition (\ref{eq:max_pric})) and in mean-value property (Condition (\ref{eq:mean_val})). This method will be wrapped in a separate technical lemma (see Lemma \ref{lem:combinatorial} in Section \ref{sec:proof} below), and then used as a `black box' to prove Theorem \ref{thm:quasi-lin-subsol}. The author believes this lemma or a slight modification of it, can be used in other stopping time arguments as well.

In the first section of this paper we provide notation, and state a technical lemma, which is the heart of the stopping time argument used here. We then use the lemma to prove Theorem \ref{thm:quasi-lin-subsol}. In the second section we provide a full proof of the lemma. In the last section we provide examples of classes of functions contained in $\mathcal F(A,B,\mu)$ for some function $\psi$ and constants $A,\; B$.

The structure of the paper is as follows: In the first section of this paper we provide notation, and state the technical lemma, Lemma \ref{lem:combinatorial}, which is the heart of the stopping time argument used here. We then use the lemma to prove Theorem \ref{thm:quasi-lin-subsol}. In the second section we present the proof of Lemma \ref{lem:combinatorial}. In the last section we provide applications of Theorem \ref{thm:quasi-lin-subsol}. We consider the examples presented in \ref{subsubsec:examples}, show they belong to $\mathcal F(A,B,\mu)$ for some function $\psi$ and constants $A,\; B$, and present new examples of classes of functions contained in $\mathcal F(A,B,\mu)$.

\subsection{Acknowledgements}
The author is grateful to Ilia Binder, Eero Saksman, and Eugenia Malinnikova for several very helpful discussions. In particular, I would like to extend my gratitude to Mikhail Sodin who introduced me to Moser's work on Harnack inequalities, and to Pekka Pankka who asked the original question of minimal possible growth for frequently oscillating $A$-harmonic functions, which led to the more generalised result presented here. Lastly, the Author would like to thank the referee for all of their suggestions and useful remarks. Those truly made this paper better.

This material is based upon work supported by the National Science Foundation under Grant No. 1440140, while the author was in residence at the Mathematical Sciences Research Institute in Berkeley, California, during the spring semester of 2022.

\section{The proof}\label{sec:proof}
As mentioned in the introduction, the proof relies on a technical lemma. We begin by presenting some notation and stating this lemma. {Define the collection $\mathcal P:=\bset{I, \text{ basic cube }, I\subset Q}$, and let $\mathcal Q$ denote the collection of all possible cubes composed of elements of $\mathcal P$.}
\begin{figure}[htp]
\begin{minipage}[c]{.45\textwidth}
For every basic cube $I\in\mathcal P$ we denote by $Q_j(I)$ the cube centred of $I$ composed of $j$ layers of basic cubes surrounding $I$. Formally, we define this by recursion: $Q_0(I)=I$ and $Q_{j+1}(I)=\underset{J\in\mathcal P\atop \overline J\cap\overline Q_j(I)\neq\emptyset}\bigcup J$ (see Figure \ref{fig:Qs} to the right). Next, for every $x\in Q$ we let $I(x)$ be the basic cube so that $x\in I(x)$. Lastly, we write $A\lesssim B$ if there exists a constant $\alpha>0$ so that $A\le \alpha\cdot B$ and write $A\sim B$ whenever $A\lesssim B$ and $B\lesssim A$.
\end{minipage}\hfill
\begin{minipage}[c]{.5\textwidth}
\centering
\includegraphics[scale=0.85]{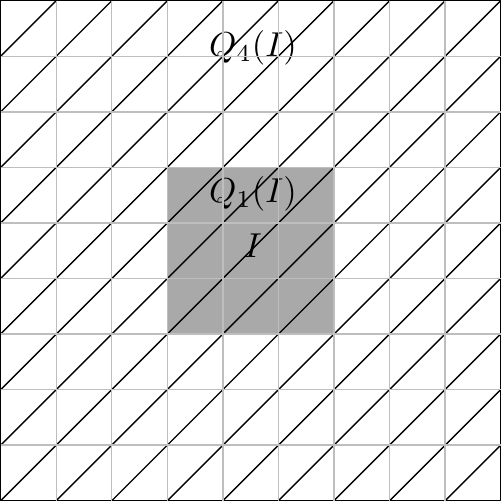}
\caption{$Q_1(I)$ is the gray cube centred at $I$, while $Q_4(I)$ is the stripped area. The grey lines are the boundary of the basic cubes.}
\label{fig:Qs}
\end{minipage}
\end{figure}

\begin{lem}\label{lem:combinatorial}
Given $\eps>0$, $r_0>1$, there exist constants $c_0\in\bb{0,1}$ and $N_0\in\N$ both depend on the dimension alone, so that for every $N>N_0$ and every collection of cubes, $\mathcal E\subset\mathcal P$, if $\frac{\# \mathcal E}{\#\mathcal P}\le \eps\cdot c_0$, then at least half of the basic cubes $I\in\mathcal P$ satisfy that there exists a sequence $\bset{k_j(I)}$ so that
\begin{enumerate}[label=\it{\arabic*.}]
\item \label{prop:gem} For every $x\in\partial Q_{k_j(I)}$ there exists $j=j(x)\ge\frac{r_0}2$ satisfying
\begin{enumerate}
\item $Q_{j(x)}(I(x))\subset Q_{k_{j+1}(I)}\setminus Q_{k_{j-1}(I)}$.\label{eq:contained}
\item $\#\bset{I\in\mathcal E, I\subset Q_{j(x)}(I(x))}\le \eps\cdot m_d\bb{Q_{j(x)}(I(x))}$. \label{eq:little_bad}
\end{enumerate}
\item $\#\bset{k_j(I)}\gtrsim \frac{N}{1+\bb{\frac{\#\mathcal E}{\eps\cdot N}}^{\frac1{d-1}}}\log^{\frac d{d-1}}\bb{2+\frac{\#\mathcal E}{\eps\cdot N}}$.\label{eq:sequence_length}
\end{enumerate}
\end{lem}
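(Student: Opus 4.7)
The plan is to carry out a Jones--Makarov style stopping-time/counting argument adapted to the cubic structure on $\mathcal P$.

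First, I would introduce the notion of a \emph{good scale}: for a basic cube $J\in\mathcal P$ and an integer $r\geq r_0/2$, call $r$ good for $J$ when $\#\{I\in\mathcal E:I\subset Q_r(J)\}\leq\eps\cdot m_d(Q_r(J))$, and bad otherwise. A straightforward double counting (each $I\in\mathcal E$ lies in $Q_r(J)$ for exactly $m_d(Q_r(J))$ choices of the centre $J$, up to boundary effects) shows that for every fixed $r$ the number of $J$ for which $r$ is bad is at most $\#\mathcal E/\eps$. Summing over $r$ yields a global budget on how much ``badness'' is distributed among the pairs $(J,r)$.

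Next, fix a candidate centre $I\in\mathcal P$ and attempt to build the sequence $\{k_j(I)\}$ inductively. Given $k_{j-1}$ and $k_j$, the task is to pick $k_{j+1}>k_j$ so that for every $x\in\partial Q_{k_j}(I)$ there exists a good scale $j(x)\geq r_0/2$ for $I(x)$ with $Q_{j(x)}(I(x))\subset Q_{k_{j+1}}(I)\setminus Q_{k_{j-1}}(I)$. Because $I(x)$ lies at combinatorial distance $k_j$ from the centre, this containment amounts to $j(x)+1\leq\min(k_j-k_{j-1},\;k_{j+1}-k_j)$. Setting
\[
g(k) := \max_{x\in\partial Q_k(I)}\min\{r\geq r_0/2 : r \text{ is good for } I(x)\},
\]
we may take $k_{j+1}-k_j\approx\max(g(k_j),g(k_{j+1}))+1$, and the entire construction reduces to bounding the partial sums $\sum_j g(k_j)$ above by $N/2$.

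The heart of the argument is a Chebyshev-type estimate iterated across dyadic scales: a boundary point $x$ with minimal good scale exceeding $s$ must see \emph{every} scale in $[r_0/2,s]$ bad for $I(x)$, so a lot of $\mathcal E$-mass must cluster in $Q_s(I(x))$. Bucketing boundary points by the dyadic size of their minimal good scale and telescoping across those layers, one bounds the average value of $g(k)$ substantially below the naive bound $(\#\mathcal E/(\eps N))^{1/(d-1)}$. The outcome is that for most centres $I$ the resulting sequence has length at least $N/(1+M^{1/(d-1)})\cdot\log^{d/(d-1)}(2+M)$ with $M=\#\mathcal E/(\eps N)$. A final Markov argument over $I\in\mathcal P$, using the hypothesis $\#\mathcal E\leq\eps c_0\#\mathcal P$, then shows the construction succeeds for at least half of all basic cubes provided $c_0=c_0(d)$ is chosen sufficiently small.

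The main obstacle I expect is recovering the logarithmic factor $\log^{d/(d-1)}(2+M)$. A one-shot Chebyshev estimate yields only the power-law rate $N/M^{1/(d-1)}$; the sharper bound requires a genuinely multi-scale iteration that exploits the nesting of bad scales, in the spirit of \cite{JoneMakarov1995}. The additional difficulty relative to that paper is that the iteration must be executed uniformly over the $(d-1)$-dimensional boundary $\partial Q_k(I)\subset\R^d$, rather than along a single radial direction, so the bookkeeping of how much of the boundary ``budget'' is spent at each dyadic scale is the most delicate step.
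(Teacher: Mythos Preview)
Your outline is in the right spirit and tracks the paper's argument at the coarse level: a stopping-time construction driven by a ``minimal good scale'' function on each annulus, a multi-scale estimate to control that function, and a final Markov/averaging step over centres $I$. But two concrete pieces of machinery are missing, and without them the proof does not close.

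\textbf{The inductive gap definition is circular.} You propose $k_{j+1}-k_j\approx\max(g(k_j),g(k_{j+1}))$, but $g(k_{j+1})$ is not known until $k_{j+1}$ is chosen. The paper sidesteps this by replacing the centre-dependent quantity $g(k)$ with a \emph{universal} nondecreasing step function $M(k)$, defined once for all $I$, satisfying a doubling-type bound $M(k)/M(k-M(k))\le 4$. One then sets $K_I=\{k:\rho|_{A(I,k)}\le M(k)\}$ and builds the sequence top-down inside $K_I$ with gaps $M(\kappa_j)$; the doubling bound guarantees $\#\{\kappa_j\}\gtrsim\sum_k 1/M(k)$ restricted to $K_I$. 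The ``most centres'' statement then becomes: for each fixed $k$, at least $\tfrac{11}{12}N^d$ centres have $k\in K_I$, which is a counting problem independent of the recursion.

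\textbf{The logarithmic gain.} You correctly flag this as the main obstacle but do not supply a mechanism. The paper's device is: (i) cover $\mathcal P$ by a collection $\mathcal M$ of maximal dyadic cubes $J$ with $\ell(J)\sim\rho$ at the worst point of $J$; (ii) bound $\sum_{J\in\mathcal M,\,\ell(J)\ge 2^{m_0}} m_d(J)\lesssim\#\mathcal E/\eps$ via the Hardy--Littlewood maximal theorem; (iii) choose the steps $s_m$ of $M$ so that ``$M(k)=2^m$'' holds on an interval of length roughly $(N^d/\sum_{\ell\ge m}2^\ell n_\ell)^{1/(d-1)}$, where $n_\ell$ counts cubes in $\mathcal M$ of size $2^\ell$; (iv) lower-bound $\sum_k 1/M(k)\sim\sum_m s_m 2^{-m}$ by Jensen with the convex function $t\mapsto t^{-1/(d-1)}$. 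It is precisely this Jensen step, applied across the dyadic levels $m$, that converts the power-law estimate into the extra factor $\log^{d/(d-1)}(2+\#\mathcal E/(\eps N))$. Your ``bucketing boundary points by dyadic size of minimal good scale and telescoping'' is heading toward (i)--(iii), but nothing in the proposal produces (iv), and a straight Chebyshev iteration will not.
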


Let us first show how the proof of Theorem \ref{thm:quasi-lin-subsol} follows from this lemma, and then prove it:

\begin{proof}[The proof of Theorem \ref{thm:quasi-lin-subsol}]
{In general, the proof follows the same lines as the arguments done in \cite[p.8-9]{MyJM2020}. However, because we have a weak maximum principle as in \eqref{eq:max_pric}, and a generalized mean-value property as in \eqref{eq:mean_val}, some adjustments to the proof need to be made. We will indicate these adjustments throughout the proof.}

Let $c_0$ be the constant defined in the Lemma \ref{lem:combinatorial} with $\eps$ that will be chosen later and will depend on the $A,\;B,\;d$ and $\psi$. Let $f$ be a monotone non-decreasing function satisfying that $f(t)\le \eps\cdot c_0t^d$ for all large enough $t$, and let $u\in\mathcal F(A,B,\mu)$ be an $f$-oscillating function. We define the collection
\begin{eqnarray*}
\mathcal E&=&\bset{I\in \mathcal P, I\text{ is a rogue basic cube}}\\
\# \mathcal E&\le&\#\bset{I\in\mathcal P,I\text{ is a rogue basic cube}}=f(N)\cdot\gamma\bb{Q}<f(N)\le \eps\cdot c_0N^d.
\end{eqnarray*}
We may therefore apply Lemma \ref{lem:combinatorial} to conclude that for at least half of the basic cubes in $Q$ there exists a sequence $\bset{k_j(I)}$ so that 
\begin{enumerate}
\item For every $x\in\partial Q_{k_j(I)}$ there exists $r_x=\frac{j(x)}2\ge \frac{r_0}2$ so that
\begin{enumerate}
\item $B(x,2r_x)\subset Q_{j(x)}(I(x))\subset Q_{k_{j+1}(I)}\setminus Q_{k_{j-1}(I)}$.
\item $\#\bset{I\in\mathcal E, I\cap B\bb{x,r_x}\neq\emptyset}\le \#\bset{I\in\mathcal E, I\subset  Q_{j(x)}(I(x))}\le \eps\cdot m_d\bb{ Q_{j(x)}(I(x))}$.
\end{enumerate}
\item $\#\bset{k_j(I)}\gtrsim \frac{N}{1+\bb{\frac{\#\mathcal E}{\eps\cdot N}}^{\frac1{d-1}}}\log^{\frac d{d-1}}\bb{2+\frac{\#\mathcal E}{\eps\cdot N}}$.
\end{enumerate}
Following the {weak maximum principle in} \eqref{eq:max_pric}, there exists $x_j\in\partial Q_{k_j}$ so that $u(x_j)\ge \frac 1A\cdot M_u(Q_{k_j})$. On the other hand, following property 1(b) of the sequence $\bset{k_j(I)}$, there exists $r_j$ so that 
$$
\#\bset{I\in\mathcal E, I\cap B\bb{x_j,r_j}\neq\emptyset}\le \eps\cdot (2r_j+1)^d\le c_d\cdot\eps\cdot m_d(B(x_j,r_j)),
$$
for a constant $c_d$ which depends on the dimension alone. 

{The first adjustment to the proof is here. We need to bound from above the ratio $\frac{\mu\bb{B(x_j,r_j)\cap\bset{u>0}}}{\mu\bb{B(x_j,r_j)}}$. However, the measure $\mu$ is not, in general, translation invariant, and does not scale like Lebesgue's measure, and therefore we need a new argument to bound this quantity.} Using Condition (\ref{eq:measure}), we see that since the cubes are disjoint
\begin{align*}
&\frac{\mu\bb{B(x_j,r_j)\cap\bset{u>0}}}{\mu\bb{B(x_j,r_j)}}\le\frac{\mu\bb{B(x_j,r_j)\cap\underset{I\in\mathcal E}\bigcup I}}{\mu\bb{B(x_j,r_j)}}+\frac{\mu\bb{B(x_j,r_j)\setminus B(x_j,r_j-\sqrt d)}}{\mu\bb{B(x_j,r_j)}}+\frac{\underset{I\in\mathcal P\setminus\mathcal E\atop I\subset B(x_j,r_j)}\sum \mu \bb{I\cap\bset{u>0}}}{\mu\bb{B(x_j,r_j)}}\\
&\le \psi\bb{\frac{m_d\bb{B(x_j,r_j)\cap\underset{I\in\mathcal E}\bigcup I}}{m_d\bb{B(x_j,r_j)}}}+\psi\bb{\frac{m_d\bb{B(x_j,r_j)\setminus B(x_j,r_j-\sqrt d)}}{m_d\bb{B(x_j,r_j)}}}+\Delta\underset{I\in\mathcal P\setminus\mathcal E\atop I\subset B(x_j,r_j)}\sum\frac{\mu\bb{I}}{\mu\bb{B(x_j,r_j)}}\\
&\le \psi\bb{c_d\cdot\eps}+\psi\bb{\frac{d^{\frac d2}\cdot 2^d}{r}}+\Delta\le  \psi\bb{c_d\cdot\eps}+\psi\bb{\frac{d^{\frac d2}\cdot 2^d}{r_0}}+\Delta
\end{align*}
where the second inequality is since $I\nin\mathcal E$ satisfy property $P_2$. Note that this bound is uniform in $x_j$, and only depends on the number of `rogue' cubes intersecting the ball $B(x_j,r_j)$ and on the choice of $\Delta$, $\eps$, and $r_0$ which will depend on the function $\psi$, the constants $A$ and $B$, and the dimension $d$.

Next, following the weighted mean-value property {in \eqref{eq:mean_val}}, for every ball $B(x_j,r_j)$ we have
\begin{align*}
A\cdot u(x_j)&\le A\cdot B\cdot\frac1{\mu(B(x_j,r_j))}\underset {B(x_j,r_j)}\int u d\mu\le A\cdot B\cdot M_u(B(x_j,r_j))\cdot \frac{\mu(B(x_j,r_j)\cap\bset{u>0})}{\mu(B)}\\
& <A\cdot B\cdot M_u(B(x_j,r_j))\bb{  \psi\bb{c_d\cdot\eps}+\psi\bb{\frac{d^{\frac d2}\cdot 2^d}{r_0}}+\Delta}.
\end{align*}
{Note that, generalizing the original proof, an additional constant $A$ was required due to the weak maximum principle in \eqref{eq:max_pric}.} We conclude that if $\Delta$ and $\eps$ {are} chosen small enough (depending on $A,\; B$, and the function $\psi$) and $r_0$ is chosen large enough (depending on $\psi, d$ and the constants $A,B$) then there exists $\delta>0$ so that
$$
A\cdot B\cdot \bb{  \psi\bb{c_d\cdot\eps}+\psi\bb{\frac{d^{\frac d2}\cdot 2^d}{r_0}}+\Delta}<1-\delta\le e^{-\delta},
$$
implying that
$$
M_u(Q_{k_{j+1}})\ge M_u(B)\ge A\cdot u(x_j)\cdot e^\delta\ge M_u(Q_{k_j})\cdot e^\delta,
$$
following property 1(a) of the sequence $\bset{k_j(I)}$ and the maximum principle. Applying this argument inductively and using the maximum principle,
$$
\frac{M_u(I)}{M_u(Q)}=\prodit j 1 {\#\bset{k_j(I)}-1} \frac{M_u(Q_{k_j(I)})}{M_u(Q_{k_{j+1}(I)})}\le \exp\bb{-\#\bset{k_j(I)}\cdot\delta}\le \exp\bb{-c\cdot \frac{N}{1+\bb{\frac{\#\mathcal E}{\eps\cdot N}}^{\frac1{d-1}}}\log^{\frac d{d-1}}\bb{2+\frac{\#\mathcal E}{\eps\cdot N}}}
$$
for some constant $c$, which depends on the dimension, the constants $A,\; B$ and the function $\psi$. To get a similar bound  with $f(N)$ instead of $\#\mathcal E$, we note that the function $\psi$ defined by
$$
\psi(x):=\frac1{1+x^{\frac1{d-1}}}\cdot\log^{\frac d{d-1}}\bb{2+x}
$$
is monotone decreasing in some neighbourhood of $\infty$, which depends on the dimension. Combining this with the fact that $\#\mathcal E\le f(N)$, we see that for every $N$ large enough
\begin{eqnarray*}
\frac{M_u(I)}{M_u\bb{Q}}&\le&\exp\bb{-c\cdot\frac{N}{1+\bb{\frac{\#\mathcal E}{\eps\cdot N}}^{\frac1{d-1}}}\cdot\log^{\frac d{d-1}}\bb{2+\frac{\#\mathcal E}{\eps\cdot N}}}=\exp\bb{-c\cdot N\cdot \psi\bb{\frac{\#\mathcal E}{\eps\cdot N}}}\\
&\le&\exp\bb{-c\cdot N\cdot \psi\bb{\frac{f(N)}{\eps\cdot N}}}=\exp\bb{-c\cdot\frac{N}{1+\bb{\frac{f(N)}{\eps\cdot N}}^{\frac1{d-1}}}\cdot\log^{\frac d{d-1}}\bb{2+\frac{f(N)}{\eps\cdot N}}}.
\end{eqnarray*}
On the other hand, as without loss of generality $c_0<\frac12$, more than half of the basic cubes in $Q$ satisfy that $M_u(I)\ge 1$. We conclude that there exists at least one cube satisfying $M_u(I)\ge 1$ and the inequality above, implying that
\begin{eqnarray*}
M_u\bb{Q}&\ge&\exp\bb{c\cdot\frac N{1+\bb{\frac{f(N)}{\eps\cdot N}}^{\frac1{d-1}}}\log^{\frac d{d-1}}\bb{2+\frac{f(N)}{\eps\cdot N}}}M_u(I)\\
&\ge&\exp\bb{c\cdot\frac N{1+\bb{\frac{f(N)}{\eps\cdot N}}^{\frac1{d-1}}}\log^{\frac d{d-1}}\bb{2+\frac{f(N)}{\eps\cdot N}}}.
\end{eqnarray*}
As this holds for every $N\in\N$ large enough,
$$
\limitinf R\infty\frac{\log\bb{M_u(R)}}{\frac{R}{1+\bb{\frac{f(R)}{\eps\cdot R}}^{\frac1{d-1}}}\cdot\log^{\frac d{d-1}}\bb{2+\frac{f(R)}{\eps\cdot R}}}>0
$$
concluding our proof keeping in mind that $\eps$ was a constant which depends on the parameters $A,\; B,\; d$, and $\psi$.
\end{proof}
\section{The proof of Lemma \ref{lem:combinatorial}}
{This lemma is a refinement of \cite[Lemma 2.1]{MyJM2020}. In \cite[Lemma 2.1]{MyJM2020} we obtain a lower bound on the maximum of a subharmonic function which is frequently oscillating in some large cube. Here we extract the geometric property that allows us to find that lower bound, namely Property \eqref{prop:gem} in the Lemma, by using a collection of `rogue' cubes denoted $\mathcal E$ and bounding its density in all cubic annuli centered on some special sequence of cubic annuli, $\partial Q_{k_j}(I)$. This geometric property may find applications elsewhere as well.}

{There is also some novelty in this version of the lemma. In \cite[Lemma 2.1]{MyJM2020} we look at the density in cubes whose sizes ranges from 1 to $\infty$ and the density was bounded from bellow by some fixed constant corresponding to $1-\delta_d$ where $\delta_d$ is assumed to be small, i.e., we allow many `rogue' cubes. Here we look at the density in cubes of edge size at least $r_0$ and the density is bounded from above by $\eps$, both are parameters the user is free to choose. The proof is very similar to the proof done in \cite[Lemma 2.1]{MyJM2020} while keeping the parameters $r_0,\eps$ in mind. We include it here for completeness.}

Let $\rho:\mathcal P\rightarrow\bset{0,\cdots, N}$ be the function describing the edge-length of the smallest cube so that $Q_{\rho(I)}(I)$ does not contain many `rogue` cubes. {Here comes the first difference in the proof- we look at the density of the set of `rogue' cubes instead of the set where we have `good' behavior. In addition, the notion of `not many' is quantified by $\eps$ instead of $1-\delta_d$, and the `smallest' allowed edge-length is $r_0$. Both $r_0$ and $\eps$ are parameters given by the user and not constants.} Formally, 
$$
\rho(I):=\inf\bset{j\ge r_0,\; \#\bset{J\in\mathcal E, J\subset Q_{j}(I)}\le \frac\eps2\cdot m_d\bb{Q_{j}(I)}},
$$
where $m_d$ denotes Lebesgue's measure on $\R^d$. For every $I\in \mathcal P$ if $I\subset \sbb{-N+\frac{N}{10^2d},N-\frac{N}{10^2d}}^d$, 
$$
\#\bset{I\in\mathcal E, J\subset Q_{\left\lfloor{\frac{N}{10^2d}}\right\rfloor}(I)}<\eps\cdot c_0\cdot N^d\le\frac\eps2\cdot m_d\bb{Q_{\left\lfloor{\frac{N}{10^2d}}\right\rfloor}(I)}
$$
as long as $c_0$ is chosen numerically small enough, depending on the dimension alone. Then for every such $I$, we know that $\rho(I)$ is finite. It could be infinite for $I$ close to the boundary of $Q$.
\begin{figure}[htp]
\begin{minipage}[c]{.50\textwidth}
For every basic cube $I$ we denote by $A(I,k)= Q_k(I)\setminus Q_{k-1}(I)$. Intuitively, this is the $k$th layer of basic cubes surrounding $I$ (see for example the alternating grey and black cubed annuli in Figure \ref{fig:layers} to the right). Let $Q_0:=\sbb{-N+\frac{2N}{10^2d},N-\frac{2N}{10^2d}}^d$. We will construct a monotone non-decreasing function  $M:\N\rightarrow\N$, so that for every $k$ fixed it satisfies
\begin{equation*}
\tag{M} \#\bset{I\in\mathcal P, k\in K_I}\ge\frac{11}{12}N^d, \label{eq:M}
\end{equation*}
\begin{equation*}
\tag{Q} \frac{M(k)}{M(k-M(k))}\le 4, \label{eq:quot}
\end{equation*}
where $K_I$ is a set defined for every basic cube $I\in\mathcal P$ with $I\subset Q_0$ by
\end{minipage}\hfill
\begin{minipage}[c]{.45\textwidth}
\centering
\includegraphics[scale=0.35]{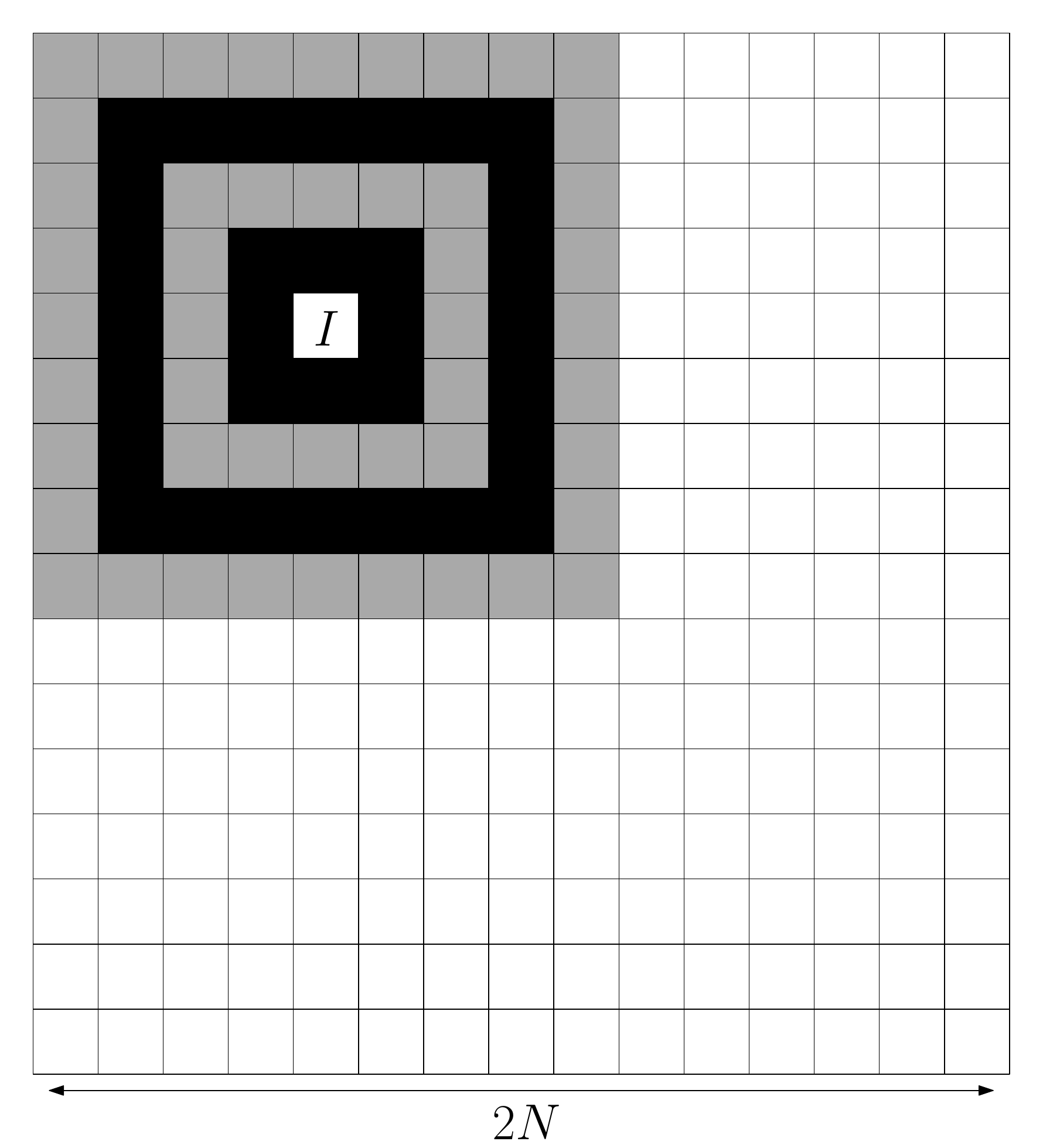}
\caption{In this figure one can see the layers about the cube $I$ which are coloured alternately.}
\label{fig:layers}
\end{minipage}
\end{figure}
$$
K_I:=\bset{k\in\bset{1,\cdots,\frac {N}{10^2d}},\; \forall x\in Q\cap A(I,k),\rho(x)\le\ M(k)}.
$$

Note that if $I\not\subset Q_0$, then it is possible that $\rho(x)=\infty$ for some $x\in Q\cap A(I,k)$ and $K_I$ is empty or small.\\
For now, we assume such a function exists.\\
For every cube $I\in \mathcal P$, define the monotone increasing sequence $\bset{\kappa_j}=\bset{\kappa_j(I)}$ in the following reversed recursive way:
\begin{eqnarray*}
\kappa_{N(I)}&:=&\max\bset{k\in K_I}\\
\kappa_j&:=&\max\bset{k\in K_I,\; k<\kappa_{j+1}-M(\kappa_{j+1})},
\end{eqnarray*}
where $N(I)$ is the eventual length of the sequence. We choose a cube $I$ so that $I\subset Q_0$, then in particular, $Q_{\left\lfloor{\frac {N}{10^2d}}\right\rfloor}(I)\subset Q$. Let $\gamma_j$ denote the boundary of $Q_j:=Q_{\kappa_j}(I)$, then by the way the set $K_I$ was defined, for every $x\in\gamma_j$,
$$
\rho(I(x))\le M(\kappa_j)\le dist\bb{\gamma_j,\gamma_{j+1}}\Rightarrow Q_{\rho(I(x))}(I(x))\subset Q_{j+1}.
$$
On the other hand, since $\kappa_{j-1}\le \kappa_j-M(\kappa_j)$, then 
$$
Q_{\rho(I(x))}(I(x))\cap Q_{j-1}=\emptyset.
$$
Overall, Property \ref{eq:contained} holds. Property \ref{eq:little_bad} holds by the way the function $\rho$ was defined. To conclude the proof, we need to bound from bellow $\#\bset{\kappa_j}$ for all but at most half of the basic cubes $I\in \mathcal P$.\\
Note that if the function $M$ is too big, the sequence $\bset{\kappa_j}$ will be a very short sequence, for most cubes. On the other hand, the smaller the function $M$ is, the harder it becomes to satisfy Property (\ref{eq:M}).

We shall conclude the proof in three steps:
\vspace{-1em}
\begin{enumerate}[label=Step \arabic*:]
\item
Show that at least $\frac{10}{11}N^d$ of the basic cubes $I\in\mathcal P$ satisfy
\begin{equation*}
\tag{$\S$}\#\bset{\kappa_j(I)}\ge \frac1{60}\sumit k 1 {\frac {N}{6d}}\frac 1{M(k)}.\label{eq:Mk_bound}
\end{equation*}
\item Define the function $M$, show it is non-decreasing, and prove that it satisfies conditions (\ref{eq:M}) and (\ref{eq:quot}).
\item Bound the sum $\sumit k 1 {\frac {N}{10^2d}}\frac 1{M(k)}$ from below.
\end{enumerate}
\subsubsection{Step 1:  Bounding $\sumit k 1 {\frac {N}{6d}}\frac 1{M(k)}$ from above}
\begin{claim}
For at least $\frac{10}{11}N^d$ of the basic cubes $I\in\mathcal P$, (\ref{eq:Mk_bound}) holds.
\end{claim}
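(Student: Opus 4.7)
The plan is to combine a first-moment averaging argument based on property \eqref{eq:M} with a slab-counting argument based on the doubling property \eqref{eq:quot}. Write $S := \sum_{k} \tfrac{1}{M(k)}$ over the range on which the sum in the statement runs, and for each $I \in \mathcal{P}$ introduce the ``missing weight''
$$
W(I) \;:=\; \sum_{k\notin K_I}\frac{1}{M(k)}.
$$
Interchanging the order of summation and invoking \eqref{eq:M},
$$
\sum_{I\in\mathcal{P}} W(I) \;=\; \sum_{k}\frac{\#\{I\in\mathcal{P}:\,k\notin K_I\}}{M(k)} \;\le\; \frac{N^d}{12}\,S.
$$
Markov's inequality then gives $\#\{I:\,W(I)\ge \tfrac{11}{12}S\}\le N^d/11$, so at least $\tfrac{10}{11}N^d$ cubes $I$ satisfy $\sum_{k\in K_I} 1/M(k) \ge S/12$.

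Next I would fix such a ``good'' cube $I\subset Q_0$ and estimate $\#\{\kappa_j(I)\}$ from below by partitioning $K_I$ along the sequence $\{\kappa_j\}$. The defining recursion $\kappa_{j-1}=\max\{k\in K_I:\,k<\kappa_j-M(\kappa_j)\}$ forces $K_I$ to have no element in the open interval $(\kappa_{j-1},\kappa_j-M(\kappa_j))$; consequently, the elements of $K_I$ in the slab $(\kappa_{j-1},\kappa_j]$ are all confined to the shorter interval $[\kappa_j-M(\kappa_j),\kappa_j]$, which contains at most $M(\kappa_j)+1$ integers. Monotonicity of $M$ yields $1/M(k)\le 1/M(\kappa_j-M(\kappa_j))$ on that interval, so by \eqref{eq:quot}
$$
\sum_{k\in K_I\cap(\kappa_{j-1},\kappa_j]}\frac{1}{M(k)} \;\le\; \frac{M(\kappa_j)+1}{M(\kappa_j-M(\kappa_j))} \;\le\; 5.
$$
Summing over $j$ and adding a similar bound for the tail below $\kappa_1$ gives
$$
\sum_{k\in K_I}\frac{1}{M(k)} \;\le\; 5\bigl(\#\{\kappa_j(I)\}+O(1)\bigr).
$$
Combining this with the Markov lower bound yields $\#\{\kappa_j(I)\}\ge S/60$ as soon as $N$ is large enough to swallow the additive constant, which is exactly \eqref{eq:Mk_bound}.

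The step I expect to be the main obstacle is the bookkeeping around the boundary of $Q$. For $I\not\subset Q_0$ the set $K_I$ may be severely degenerate because $\rho(x)=\infty$ is possible for some $x\in A(I,k)$, so such cubes must be excluded from the ``good'' count. Fortunately they number only $O(N^{d-1})$, negligible against the $N^d/11$ Markov loss once $N$ is large. A minor related point is that $K_I$ naturally lives in $[1,\lfloor N/(10^2d)\rfloor]$ while the claim sums up to $\lfloor N/(6d)\rfloor$; this is either a mild typo or is accommodated by extending the definition of $M$ at the next step, and in either case the argument above goes through verbatim on the relevant common range.
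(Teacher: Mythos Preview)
Your argument is correct and matches the paper's proof essentially line for line. The paper defines $B(I)=\sum_{k\in K_I}1/M(k)$ and the set $X=\{I:B(I)\ge S/12\}$, then runs the same averaging (your Markov step is the complementary formulation of the paper's double-counting of $\sum_I B(I)$) and the same slab estimate using \eqref{eq:quot} to obtain $12B(I)\le 60\,\#\{\kappa_j\}$.

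Two small remarks on your side comments. First, there is no genuine ``tail below $\kappa_1$'': the recursion terminates precisely when $\{k\in K_I:k<\kappa_1-M(\kappa_1)\}=\emptyset$, so every element of $K_I$ already lies in one of the slabs $[\kappa_j-M(\kappa_j),\kappa_j]$ and no additive $O(1)$ is needed. Second, your estimate that the cubes $I\not\subset Q_0$ number $O(N^{d-1})$ is incorrect; since $Q_0$ has edge length $(1-c/d)\,\ell(Q)$ for a fixed constant $c$, those cubes actually form a constant fraction (roughly $N^d/50$) of $\mathcal P$. This does not damage your proof, however: with the convention $K_I=\emptyset$ for $I\not\subset Q_0$, such cubes have $W(I)=S$ and are already absorbed by the Markov bound, exactly because property \eqref{eq:M} was established (in Claim~\ref{clm:prop2}) with the boundary loss built in. Your observation about the $N/(6d)$ versus $N/(10^2d)$ discrepancy is accurate; it is a harmless inconsistency in the paper.
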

\begin{proof}
For every basic cube $I\in\mathcal P$ let
$$
B(I):=\underset{k\in K_I}\sum\frac1{M(k)},
$$
and define the collection
$$
X:=\bset{I,\; B(I)\ge \frac 1{12}\sumit k 1 {\frac {N}{10^2d}}\frac1{M(k)}}.
$$
We will first show that every basic cube $I\in X$ satisfies (\ref{eq:Mk_bound}). Since $M$ is a monotone non-decreasing function with $M(1)\ge 1$, and by the way the sequence $\bset{\kappa_j}$ was defined, for every $I\in X$
\begin{eqnarray*}
\sumit k 1 {\frac {N}{10^2d}}\frac 1{M(k)}&\le& 12 B(I)=12\underset{k\in K_I}\sum\frac1{M(k)}=12\sumit j 2 {\#\bset{\kappa_j}} \underset{k\in K_I\atop \kappa_j-M(\kappa_j)\le k\le \kappa_j}\sum\frac 1{M(k)}\\
&\le&12\sumit j 2 {\#\bset{\kappa_j}} \sumit k{\kappa_j-M(\kappa_j)}{\kappa_j}\frac1{M(k)}\le 12\sumit j 1 {\#\bset{\kappa_j}}\frac{M(\kappa_j)+1}{M(\kappa_j-M(\kappa_j))}\le60\#\bset{\kappa_j},
\end{eqnarray*}
following Property (\ref{eq:quot}). To conclude the proof it is left to show that $\#X\ge\frac{10}{11}N^d$.\\
Following Property (\ref{eq:M}),
\begin{equation*}
\tag{$\Sigma$}\underset{I\in \mathcal P}\sum B(I)=\underset{I\in \mathcal P}\sum\;\underset{k\in K_I}\sum\frac1{M(k)}=\sumit k 1 {\frac {N}{10^2d}}\frac1{M(k)}\#\bset{I\in \mathcal P,\; k\in K_I}\ge\frac{11}{12}N^d\sumit k 1 {\frac {N}{10^2d}}\frac1{M(k)}.\label{eq:B(I)}
\end{equation*}
On the other hand, by the way the set $X$ was defined,
$$
B(I)\le \begin{cases}
		\frac 1{12}\sumit k 1 {\frac {N}{10^2d}}\frac1{M(k)}&, I \nin X\\
		\sumit k 1 {\frac {N}{10^2d}}\frac1{M(k)}&, I\in X
		\end{cases}.
$$
Then
\begin{eqnarray*}
\underset{I\in \mathcal P}\sum B(I)&=&\underset{I\in X}\sum B(I)+\underset{I\nin X}\sum B(I)\le \#X\sumit k 1{\frac N{10^2d}}\frac1{M(k)}+\bb{N^d-\#X}\frac1{12}\sumit k 1 {\frac {N}{10^2d}}\frac1{M(k)}\\
&=&\bb{\frac{11}{12}\#X+\frac1{12} N^d}\sumit k 1 {\frac{N}{10^2d}}\frac1{M(k)}\overset{\text{by (\ref{eq:B(I)})}}\le\frac{12}{11N^d}\bb{\frac{11}{12}\#X+\frac1{12}N^d} \underset{I\in \mathcal P}\sum B(I)=\bb{\frac{\#X}{N^d}+\frac1{11}}\underset{I\in \mathcal P}\sum B(I)\\
&\Rightarrow& \#X\ge\frac{10}{11}N^d,
\end{eqnarray*}
concluding the proof.
\end{proof}
\subsubsection{Step 2: constructing the function $M$:}
The definition of the function $M$ will heavily depend on a collection of sets, $\mathcal M$, with special properties. To define the collection $\mathcal M$ we will need the following definition:\vspace{-0.5em}
\begin{figure}[ht]
\begin{minipage}[b]{.44\textwidth}
Given $k$ an interval $I$ is called {\it a dyadic interval of order $k$} if there exists $j$ so that $I=\pm 2^{j\cdot k}+\left[0,2^k\right)$. A cube $\mathcal C$ is called {\it a dyadic cube of order $k$} if there are $d$ dyadic intervals of order $k$, $I_1,I_2,\cdots,I_d$ so that $\mathcal C=\prodit j 1 d I_j$. We say a cube $\mathcal C$ is {\it a dyadic cube} if there exists $k$ so that $\mathcal C$ is a dyadic cube of order $k$ (see Figure \ref{fig:dyadic} to the right). Note that a dyadic cube of order $k$ is a disjoint union of $2^{d\bb{k-j}}$ dyadic cubes of order $j$, and that every distinct dyadic cubes of the same order are disjoint. This means we can define a partial order on the collection of dyadic cubes.

\end{minipage}\hfill
\begin{minipage}[b]{.55\textwidth}
\centering
\includegraphics[scale=0.58]{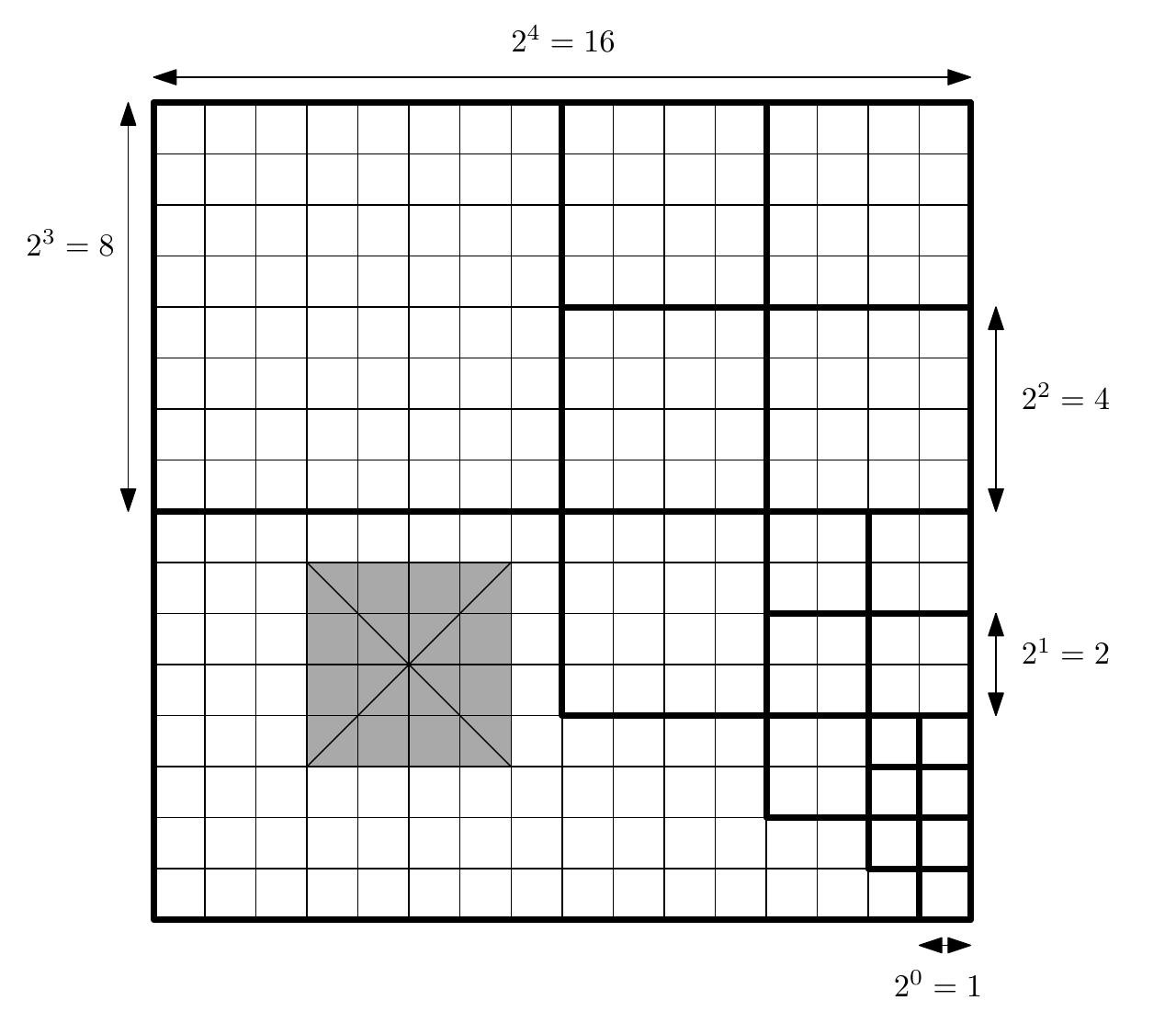}
\caption{This figure depicts dyadic cubes. The grey cube is not a dyadic cube even though it has edge-length 4.}
\label{fig:dyadic}
\end{minipage}
\end{figure}
\vspace{-1em}
 
We assume without loss of generality that $N$ is a dyadic integer. 
Note that $\rho$, defined on elements of $\mathcal P$, assumes a finite number of values, and one may order the cubes $I\in\mathcal P$ as $I_1,\cdots,I_{N^d}$ in an ascending $\rho$-order, i.e. so that $\rho(I_k)\ge\rho(I_{k+1})$. For every $I_k\in\mathcal P$ we find a dyadic cube $J_k$ so that:
\begin{enumerate}
\item $I_k\subset J_k$.
\item $2\rho(I_k)\le \ell(J_k)< 4\rho(I_k)$.
\end{enumerate}

We define the collection $\mathcal M$ to be the collection of maximal dyadic elements $\bset{J_k}$, that is
$$
J_k\in\mathcal M\iff I_k\not\subset \bunion i 1 {k-1}J_i,
$$
for if $I_k\subset J_i$ for some $i<k$, then since $\rho(I_i)\ge\rho(I_k)$, $J_k\subseteq J_i$. $\mathcal M$ forms a cover for $\sbb{-N+\frac N{10^2d},N-\frac N{10^2d}}^d$ and it is uniquely defined.\\
Let $n_\ell$ denote the number of elements in $\mathcal M$ with edge length $2^\ell$. We will define the function $M$ to be a step function, with steps
$$
s_m:=\min\bset{\bb{\frac{\alpha N^d}{\underset{\ell\ge m}\sum 2^{\ell} n_\ell}}^{\frac1{d-1}}+2^{m+2},\frac {N}{10^2d}},
$$
for some numerical constant $\alpha$ which we shall choose later. Note that this is a monotone non-decreasing sequence. We denote the index of the first element satisfying $s_m= \frac {N}{10^2d}$ by $\bar m+1$ and stop defining the sequence there. Let $m_0$ be a constant, which will depend on the dimension and on $r_0$, and will be determined later by Claim \ref{clm:large_sqrs_measure}, and define the function
$$
M(k):=	\begin{cases}
		2^{m_0}&, k\le s_{m_0}\\
		2^m&, s_{m-1}< k\le s_m,\;  m\in\bset{m_0+1,\cdots,\bar m+1}
		\end{cases}.
$$
Since the sequence $\bset{s_m}$ was monotone non-decreasing, the function $M$ is monotone non-decreasing as well.

\paragraph{The function $M$ satisfies Property (\ref{eq:quot}):} Fix $k$ and let $m$ be so that $s_{m-1}< k\le s_m$. Then, by monotonicity of the function $M$, if $m_0< k<\bar m+1$,
\begin{eqnarray*}
k-M(k)&\ge& s_{m-1}-M(s_m)=\bb{\frac{\alpha N^d}{\underset{\ell\ge m-1}\sum 2^{\ell} n_\ell}}^{\frac1{d-1}}+ 2^{m+1}-2^m\\
&=&\bb{\frac{\alpha N^d}{\underset{\ell\ge m-1}\sum 2^{\ell} n_\ell}}^{\frac1{d-1}}+2^{m}\ge \bb{\frac{\alpha N^d}{\underset{\ell\ge m-2}\sum 2^{\ell} n_\ell}}^{\frac1{d-1}}+2^{m}=s_{m-2}.
\end{eqnarray*}
In particular, by monotonicity of the function $M$,
$$
\frac{M(k)}{M(k-M(k))}\le\frac{M(s_m)}{M(s_{m-2})}=\frac{2^m}{2^{m-2}}=4.
$$

Though the definition of the steps of the function $M$ may seem forced and unnatural now, we will soon see it is in fact very natural.
\paragraph{The function $M$ satisfies Property (\ref{eq:M}):} We relay on two key claims. The first claim bounds the total number of elements in $\mathcal P$ with large $\rho$ by bounding the total measure of elements in $\mathcal M$ with large edge-length.
\begin{claim}\label{clm:large_sqrs_measure}
There exist a numerical constant $C_1$ which depend on the dimension $d$, and an index $m_0$ which depends on $r_0$ so that
$$
\underset{m\ge{m_0}}\sum 2^{m\cdot d} n_m\le \frac{C_1\cdot\#\mathcal E}\eps.
$$
\end{claim}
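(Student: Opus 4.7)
My plan is to assign to each $J \in \mathcal M$ with $\ell(J) \ge 2^{m_0}$ a witness cube $R_J$ containing at least $c_d\,\eps\,\ell(J)^d$ rogue basic cubes, then run a greedy Vitali-type selection to extract a sub-collection $\mathcal M' \subset \mathcal M$ whose witnesses are pairwise disjoint yet whose total $\ell(J)^d$-sum still dominates the full sum, and finally double-count rogue cubes against the selected witnesses.

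Take $m_0 := \lceil \log_2(4 r_0) \rceil$, so that any $J \in \mathcal M$ with $\ell(J) = 2^m \ge 2^{m_0}$ has the corresponding $I_k$ satisfying $\rho(I_k) > 2^{m-2} \ge r_0$. The infimum definition of $\rho$ then forces $j := \rho(I_k)-1 \ge r_0$ to fail the density test, so the cube $R_J := Q_{j}(I_k)$ satisfies
$$
\#(\mathcal E \cap R_J) > \frac{\eps}{2}\, m_d(R_J) \ge c_d\, \eps\, \ell(J)^d
$$
for a dimensional constant $c_d$, since $R_J$ has edge in $[2^{m-1}+1,\,2^m-1]$. Because $R_J$ is centered inside $J$ and has edge strictly less than $\ell(J)$, a direct computation gives $R_J \subset 2J$, where $2J$ denotes the concentric dilate of $J$ by factor $2$.

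Next, order the relevant $J$'s by decreasing $\ell(J)$ and greedily include $J$ in $\mathcal M'$ whenever $R_J$ is disjoint from every $R_{J'}$ already placed in $\mathcal M'$. For any skipped $J$ there exists a selected $J'$ with $\ell(J') \ge \ell(J)$ and $R_J \cap R_{J'} \ne \emptyset$; chaining $\ell^\infty$-distances through the centers $c(J), c(I_k), c(I_{k'}), c(J')$ and using $\rho(I_k) \le \ell(J)/2 \le \ell(J')/2$ together with $\rho(I_{k'}) \le \ell(J')/2$ gives $|c(J)-c(J')|_\infty \le 2\ell(J')$, whence $J \subset 5J'$ (the concentric dilate of $J'$ by factor $5$). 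Since $\mathcal M$ is a disjoint family, summing $m_d(J)$ over the $J$'s blocked by any fixed $J'$ yields at most $5^d\ell(J')^d$, and thus
$$
\sum_{J \in \mathcal M,\, \ell(J) \ge 2^{m_0}} \ell(J)^d \le (1+5^d) \sum_{J' \in \mathcal M'} \ell(J')^d.
$$

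To conclude, the witnesses $\{R_{J'}\}_{J' \in \mathcal M'}$ are pairwise disjoint by construction and each satisfies $\#(\mathcal E \cap R_{J'}) \ge c_d\, \eps\, \ell(J')^d$, so
$$
c_d\, \eps \sum_{J' \in \mathcal M'} \ell(J')^d \le \sum_{J' \in \mathcal M'} \#(\mathcal E \cap R_{J'}) \le \#\mathcal E.
$$
Combining with the preceding display yields the claim with $C_1 = (1+5^d)/c_d$, a constant depending only on the dimension. The delicate step is the swelling estimate $J \subset 5J'$, which rests on careful $\ell^\infty$ center-chaining together with the structural bounds $\rho(I_k) \le \ell(J)/2$ and $\ell(J) \le \ell(J')$; once these are in hand, the rest of the argument is routine bookkeeping using the disjointness of $\mathcal M$.
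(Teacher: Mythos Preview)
Your argument is correct and complete. It takes a somewhat different route from the paper: the paper invokes the Hardy--Littlewood maximal function theorem as a black box, observing that every point $x$ with $\rho(I(x))>2r_0$ lies in the superlevel set $\{M_{\mathbf 1_K}\ge \eps/2\}$ (where $K=\bigcup_{I\in\mathcal E}I$), so that the weak-type bound gives $m_d$ of the union of the cubes $Q_{\rho/2}$ controlled by $\#\mathcal E/\eps$, and then uses the inclusion $J\subset Q_{5\rho(I(x_J))}(I(x_J))$ together with disjointness of $\mathcal M$. You instead run the underlying Vitali selection by hand on the witness cubes $R_J=Q_{\rho(I_k)-1}(I_k)$, extract a disjoint subfamily, and use the swelling bound $J\subset 5J'$ plus disjointness of $\mathcal M$ to compare the full sum to the selected one. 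The two arguments are equivalent in spirit (the maximal theorem is itself proved by Vitali covering), but yours is more self-contained and avoids appealing to an external result; the paper's version is slightly shorter because it outsources the covering step. One cosmetic remark: you record $R_J\subset 2J$ but never use it, so that line can be dropped.
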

{This claim is a generalization of \cite[Claim 2.3]{MyJM2020}. While in the original proof $m_0$ only depended on the dimension, here it must also depend on the scale $r_0$. In addition, the constant $C_1$ depended on the dimension alone, as the upper bound on the density, $1-\delta_d$, was fixed and assumed to be bigger than $\frac12$. Here we must divide by $\eps$, which is now a parameter and may be as small as the user may wish. The proof is the same keeping track of these two parameters.}
\begin{proof}
Define the set
$$
K:=\underset{I\in\mathcal E}\bigcup I,
$$
and recall that the maximal function for cubes is defined by
$$
M_f(x):=\underset{C \text{ some cube}\atop\text{with } x\in C}\sup\frac1{m_d(C)}\underset C\int f(y)dm_d(y).
$$
Fix $x$ so that $\rho(I(x))>2r_0$, then by the way $\rho$ is defined
$$
\frac{m_d\bb{K\cap Q_{\frac{\rho(I(x))}2}(I(x))}}{m_d\bb{ Q_{\frac{\rho(I(x))}2}(I(x))}}>\frac{\frac\eps2\cdot m_d\bb{Q_{\frac{\rho(I(x))}2}(I(x))}}{m_d\bb{ Q_{\frac{\rho(I(x))}2}(I(x))}}=\frac\eps2,
$$
 and, by definition, the maximal function $M_{\indic{K}}$ satisfies
$$
M_{\indic{K}}(x)=\underset{C \text{ some cube}\atop\text{with } x\in C}\sup\frac1{m_d(C)}\underset C\int \indic{K}(y)dm_d(y)=\underset{C \text{ some cube}\atop\text{with } x\in C}\sup \frac{m_d(C\cap K)}{m_d(C)}\ge\frac{m_d\bb{K\cap Q_{\frac{\rho(I(x))}2}(I(x))}}{m_d\bb{ Q_{\frac{\rho(I(x))}2}(I(x))}}\ge\frac\eps2.
$$
We note that the same holds for every $y\in Q_{\frac{\rho(I(x))}2}(I(x))$. We conclude that $Q_{\frac{\rho(I(x))}2}(I(x))\subset\bset{M_{\indic{ K}}\ge\frac\eps2}$. Using the maximal function theorem for cubes,
\begin{equation*}
\tag{$\dagger$}\label{eq:max-func}
m_d\bb{\underset{\bset{x ,\;\rho(I(x))>2r_0}}\bigcup Q_{\frac{\rho(I(x))}2}(I(x))}\le m_d\bb{\bset{M_{\indic{K}}\ge\frac\eps2}}\lesssim \frac{m_d(K)}\eps= \frac{\#\mathcal E}\eps,
\end{equation*}
where the last equality is by the definition of the set $K$.

Let $J\in\mathcal M$ be so that $\ell(J)>8r_0$. Then there exists $x_J\in J$ with $\rho(I(x_J))\ge\frac{\ell(J)}4>2r_0$, implying that $J\subset Q_{5\rho(I(x))}(I(x))$.
We choose $m_0$ to be the first integer satisfying that $2^{m_0}>8r_0$. By using the inclusion above, since the elements in $\mathcal M$ are disjoint,
\begin{align*}
\sumit m {m_0}\infty 2^{d\cdot m} n_m&=\sumit m {m_0}\infty \bb{2^m}^d n_m=\underset{J\in\mathcal M\atop{\ell(J)\ge 2^{m_0}}}\sum m_d(J)=\underset{J\in\mathcal M\atop{\ell(J)\ge 2^{m_0}}}\sum m_d\bb{J\cap Q_{5\rho(I(x_J))}(I(x_J))}\\
&\overset{\text{disjointness}\atop\text{of }J}=m_d\bb{\underset{J\in\mathcal M\atop{\ell(J)\ge 2^{m_0}}}\bigcup\bb{J\cap Q_{5\rho(I(x_J))}(I(x_J))}}\overset{\text{inclusion}}\le m_d\bb{\underset{J\in\mathcal M\atop{\ell(J)\ge 2^{m_0}}}\bigcup Q_{5\rho(I(x))}(I(x))}\\
&\overset{(\star)}\le30^d m_d\bb{\underset{J\in\mathcal M\atop{\ell(J)\ge 2^{m_0}}}\bigcup Q_{\frac{\rho(I(x_J))}2}(I(x_J))}\le  30^dm_d\bb{\underset{\bset{x ,\;\rho(I(x))>2r_0}}\bigcup Q_{\frac{\rho(I(x))}2}(I(x))}\lesssim\frac{\#\mathcal E}\eps,
\end{align*}
the last inequality is by (\ref{eq:max-func}), concluding the proof. To show that $(\star)$ holds we look at two cases:\\

{\bf Case 1:} If the shrunken cubes are disjoint, then 
$$
10^d\cdot m_d(C_1\cup C_2)=10^d\cdot \bb{m_d(C_1)+m_d(C_2)}=m_d(10C_1)+m_d(10C_2)\ge m_d\bb{10C_1\cup 10 C_2}.
$$
Note that it is possible that $x_J$ is very close to the boundary of $J$ and therefore $Q_{\frac{\rho(I(x_J))}2}(I(x_J))\not\subset J$. In particular, it could be the case that for $J\neq J'$ we have $Q_{\frac{\rho(I(x_J))}2}(I(x_J))\cap Q_{\frac{\rho(I(x_{J'}))}2}(I(x_{J'}))\neq\emptyset$.

{\bf Case 2:} If the shrunken cubes intersect, and without loss of generality $\ell(C_1)\le\ell(C_2)$ (where $\ell(C)$ is the edge-length of the cube $C$), then $C_1\subset 3C_2$ implying that $10C_2\subset 30 C_2$ and therefore
$$
30^d\cdot m_d\bb{C_1\cup C_2}\ge30^d\cdot m_d\bb{C_2}=m_d(30 C_2)\ge m_d\bb{30 C_2\cup 10 C_1}\ge m_d\bb{10 C_2\cup 10 C_1}.
$$
The argument in Case 2 can be extended for any finite number of cubes, proving $(\star)$ holds.
\end{proof}

The second claim gives a softer condition to guarantee that Property (\ref{eq:M}) holds.
\begin{claim}\label{clm:prop2}
There exists a numerical constant $C_2$ so that for every $c_0$ small enough if $\alpha=\frac1{C_2}\bb{\frac1{12}-\frac1{50}}-c_0C_1>0$, then for all $m\ge m_0$, for every $k\le\bb{\frac{\alpha\cdot N^d}{\underset{\ell\ge m}\sum 2^{\ell} n_\ell}}^{\frac1{d-1}}$ at least $\frac{11}{12}$ of the basic cubes $I\in\mathcal P$ satisfy that $\left.\rho\right|_{A(I,k)\cap Q}\le 2^m$.
\end{claim}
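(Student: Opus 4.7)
The plan is to count the ``bad'' basic cubes $I \in \mathcal{P}$---those for which some $x \in A(I,k) \cap Q$ has $\rho(I(x)) > 2^m$---and show this count is at most $N^d/12$. By the construction of $\mathcal{M}$ (each $I_k$ is placed inside a dyadic $J_k$ of edge between $2\rho(I_k)$ and $4\rho(I_k)$, with $\mathcal{M}$ collecting the maximal such $J_k$), any basic cube $I'$ with $\rho(I') > 2^m$ lies inside some $J \in \mathcal{M}$ of edge length $\ell(J) > 2^{m+1}$. Hence, away from the boundary of $Q$, every bad $I$ satisfies $A(I,k) \cap J \ne \emptyset$ for some $J \in \mathcal{M}$ with $\ell(J) \ge 2^{m+1}$. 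The basic cubes whose $A(I,k)$-layer can reach the region where $\rho$ is undefined form a ``boundary'' batch, which I would bound crudely by $N^d/50$ using $k \le N/(10^2 d)$ and the explicit thickness of the outer layer of $Q$.

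The core of the argument is a geometric counting bound: for a fixed dyadic cube $J$ of edge $2^\ell$,
\[
\#\{I \in \mathcal{P} : A(I,k) \cap J \ne \emptyset\} \;\lesssim\; 2^\ell\, (k + 2^\ell)^{d-1} \;\lesssim\; 2^\ell k^{d-1} + 2^{\ell d}.
\]
The point is that $A(I,k) \cap J \ne \emptyset$ forces the $\ell_\infty$-distance from $I$ to $J$ to lie in $[k - 2^\ell,\, k]$, so the candidate $I$'s populate a cubical shell around $J$ of thickness $2^\ell$, whose cardinality is computed from a difference-of-$d$th-powers estimate. Summing this bound over all $J \in \mathcal{M}$ with $\ell(J) \ge 2^{m+1}$ then gives that the number of interior bad basic cubes is at most a dimensional constant $C_2$ times
\[
k^{d-1} \sum_{\ell \ge m+1} n_\ell\, 2^\ell \;+\; \sum_{\ell \ge m+1} n_\ell\, 2^{\ell d}.
\]

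To control these sums, I apply two independent quantitative inputs. The hypothesis on $k$ immediately bounds the first: $k^{d-1}\sum_{\ell \ge m} n_\ell\, 2^\ell \le \alpha N^d$. Because $m \ge m_0$, Claim \ref{clm:large_sqrs_measure} combined with the main lemma's hypothesis $\#\mathcal{E} \le \eps\, c_0\, N^d$ yields $\sum_{\ell \ge m+1} n_\ell\, 2^{\ell d} \le C_1 c_0 N^d$. Consequently, the total number of bad basic cubes is at most $C_2(\alpha + C_1 c_0) N^d + N^d/50$, and the prescribed choice $\alpha = \tfrac{1}{C_2}\bigl(\tfrac{1}{12}-\tfrac{1}{50}\bigr) - c_0 C_1$ brings this to $N^d/12$, leaving at least $\tfrac{11}{12}N^d$ good basic cubes. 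Taking $c_0$ sufficiently small (depending only on the dimension through $C_1$ and $C_2$) ensures $\alpha > 0$.

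The main obstacle is establishing the hybrid geometric bound $2^\ell (k+2^\ell)^{d-1}$ in a form that interpolates cleanly between the regime $k \gtrsim 2^\ell$ (thin shells around comparatively small cubes, producing the $k^{d-1}$ factor) and the regime $k \lesssim 2^\ell$ (where a single large cube accommodates many layers, producing the $2^{\ell d}$ factor). This is precisely the form that lets the two independent inputs---the hypothesis on $k$ and Claim \ref{clm:large_sqrs_measure}---combine additively rather than multiplicatively, producing the linear-in-$\alpha$-and-$c_0$ upper bound that closes the argument.
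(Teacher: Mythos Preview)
Your proposal is correct and follows essentially the same approach as the paper: reduce to counting $I$ whose annulus $A(I,k)$ meets some large $J\in\mathcal M$, bound $\#\{I:A(I,k)\cap J\neq\emptyset\}\lesssim 2^\ell k^{d-1}+2^{\ell d}$, sum over $J$, and plug in Claim~\ref{clm:large_sqrs_measure} together with the hypothesis on $k$. The one place you differ is the geometric count itself: the paper splits into the cases $\ell(J)>2k+1$ and $\ell(J)\le 2k+1$ and argues combinatorially via vertices of $A(I,k)$ and one-dimensional facets of $J$, whereas you obtain the same bound in one stroke by observing that $A(I,k)\cap J\neq\emptyset$ confines $c_I$ to an $\ell_\infty$-shell of thickness $\sim 2^\ell$ around $J$ and then using $(k+2^\ell)^{d-1}\lesssim k^{d-1}+2^{\ell(d-1)}$. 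Your shell argument is a bit cleaner; the paper's case split makes the two regimes (large $J$ swallowing vertices vs.\ small $J$ threading the annulus) more explicit, but both routes deliver exactly the same additive bound and close identically.
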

It becomes clear now why the sequence $\bset{s_m}$ was defined in that particular way: Fix $k$ and let $m$ be so that $s_{m-1}<k\le s_m$, then by definition $M(k)=2^m$. Next,
$$
k\in K_I\iff \rho|_{A(I,k)\cap Q}\le M(k)=2^m.
$$
On the other hand,
$$
k\le s_{m}=\bb{\frac{\alpha\cdot N^d}{\underset{\ell\ge m}\sum 2^{\ell} n_\ell}}^{\frac1{d-1}}+2^{m+2}.
$$
Following the claim above, at least $\frac{11}{12}$ of the basic cubes $I\in\mathcal P$ satisfy that $\left.\rho\right|_{A(I,k)\cap Q}\le 2^m$, or, in other words, $\#\bset{I,\; k\in K_I}\ge\frac{11}{12}N^d$, and so the function $M$ satisfies Property (\ref{eq:M}).
\begin{proof}[Proof of Claim \ref{clm:prop2}]
For every $J\in\mathcal M$, for every basic cube $I\subset J$, $\rho(I)<\frac{\ell(J)}2$, by the way the collection $\mathcal M$ was defined. In particular, if $\ell(J)<2^m$, then for every basic cube $I\subset J$, $\rho(I)<2^m$ as well.
We conclude that for $I\subset Q_0$, if $A(I,k)$ only intersects $J\in\mathcal M$ with $\ell(J)<M(k)$, then every $x\in A(I,k)$ must satisfy $\rho(I(x))<M(k)$, as $\mathcal M$ forms a cover for $\sbb{-N+\frac{N}{10^2d},N-\frac{N}{10^2d}}^d$. Overall, we need to bound how many elements are in
$$
\bset{I\subset Q_0, \exists J\in\mathcal M, J\cap A(I,k)\neq\emptyset, \text{ and }\ell(J)>2^m}\subseteq\underset{J\in\mathcal M\atop \ell(J)>2^m}\bigcup\bset{I, A(I,k)\cap J\neq \emptyset}:=\underset{J\in\mathcal M\atop \ell(J)>2^m}\bigcup B(J,k).
$$
We will bound the number of elements in $B(J,k)$, i.e. given $J\in\mathcal M$ and $k\in\bset{1,\cdots,\frac N{10^2d}}$, how many elements $I\in \mathcal P$ satisfy that $J\cap A(I,k)\neq\emptyset$?

To bound this quantity we will look at two cases. The first and simpler case is when $\ell(J)>2k+1$. Since $\ell(J)>2k+1$, the cube $J$ must contain at least one outer vertex of the set $A(I,k)$. We begin by choosing some order on the outer vertices of $A(I,k)$. Then we can associate each basic cube $I\in B(J,k)$ with the location of the first outer vertex of $A(I,k)$ lying inside $J$ (see Figure \ref{fig:sub_big} bellow). Because $J$ is a union of basic cubes and so is $A(I,k)$, the number of sites in $J$, in which such a vertex could be found, is equal to the number of basic cubes in $J$. We can therefore bound from above the number of elements in $B(J,k)$, in this case, by the number of possible vertices in $A(I,k)$ times the number of basic cubes in $J$, that is by $2^dm_d(J)=2^d\cdot\ell(J)^d$.

The second case is when $\ell(J)\le 2k+1$. In this case, if $J\cap A(I,k)\neq\emptyset$, then the boundary of $J$ must intersect $A(I,k)$. In particular, there are two adjacent vertices of $J$ one in the set bounded by the outer boundary of $A(I,k)$ or in $A(I,k)$ and the other outside (or on) $A(I,k)$. We can identify every basic cube $I$ in $B(J,k)$ by indicating the basic cube on $A([0,1]^d,k)$ where the intersection of $A(I,k)$ and $J$ occurs, and the basic cube on the one dimensional facet connecting the two adjacent vertices of $J$ mentioned earlier  (see Figure \ref{fig:sub_small} bellow). If more than two vertices satisfy this, we choose an order on the collection of one dimensional facets and use the first one intersecting $A(I,k)$. We conclude that the number of elements in $B(J,k)$, in this case, is bounded by the number of basic cubes whose closure intersects a one dimensional facet of $J$ times the number of basic cubes in $A([0,1]^d,k)$, which is bounded by
$$
2^d\cdot\ell(J)\cdot 6d\cdot(2k+1)^{d-1}\lesssim \ell(J)\cdot k^{d-1}.
$$
\begin{figure}[!ht]
	 \center
	    \begin{subfigure}[b]{0.49\textwidth}
		{
		  \centering
	      	\includegraphics[width=0.8\linewidth]{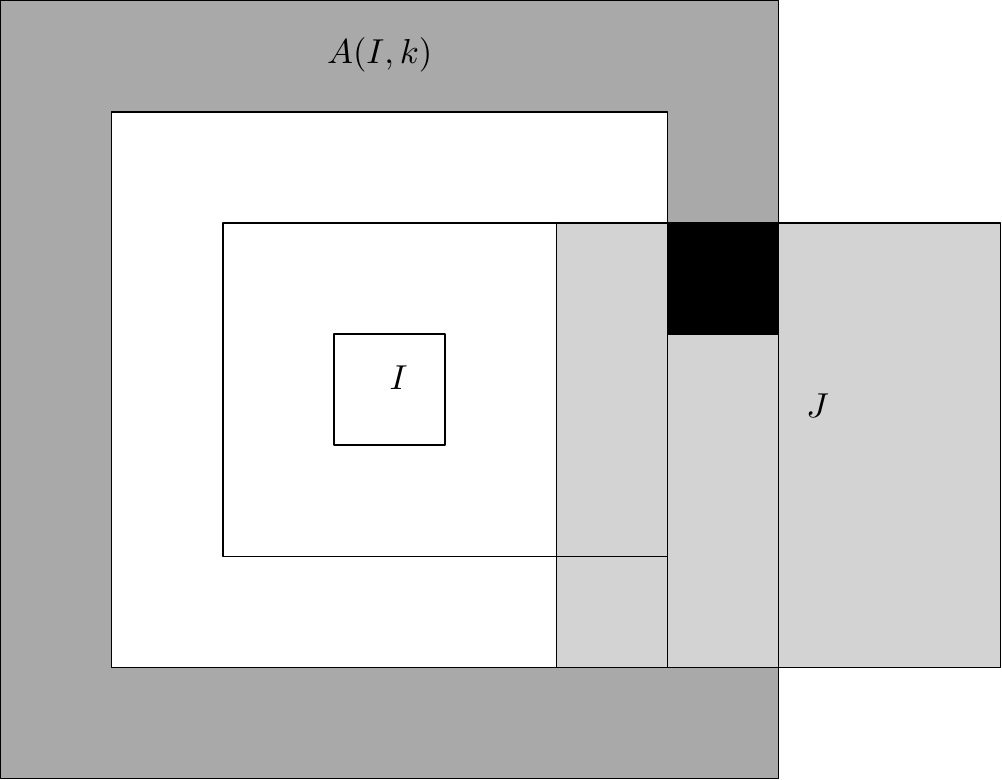}
		\caption{The case $\ell(J)\le2k+1$.}
		\label{fig:sub_small}
 		   }
	    \end{subfigure}
	    \begin{subfigure}[b]{0.49\textwidth}
		    {
		      \centering
	        	\includegraphics[width=0.8\linewidth]{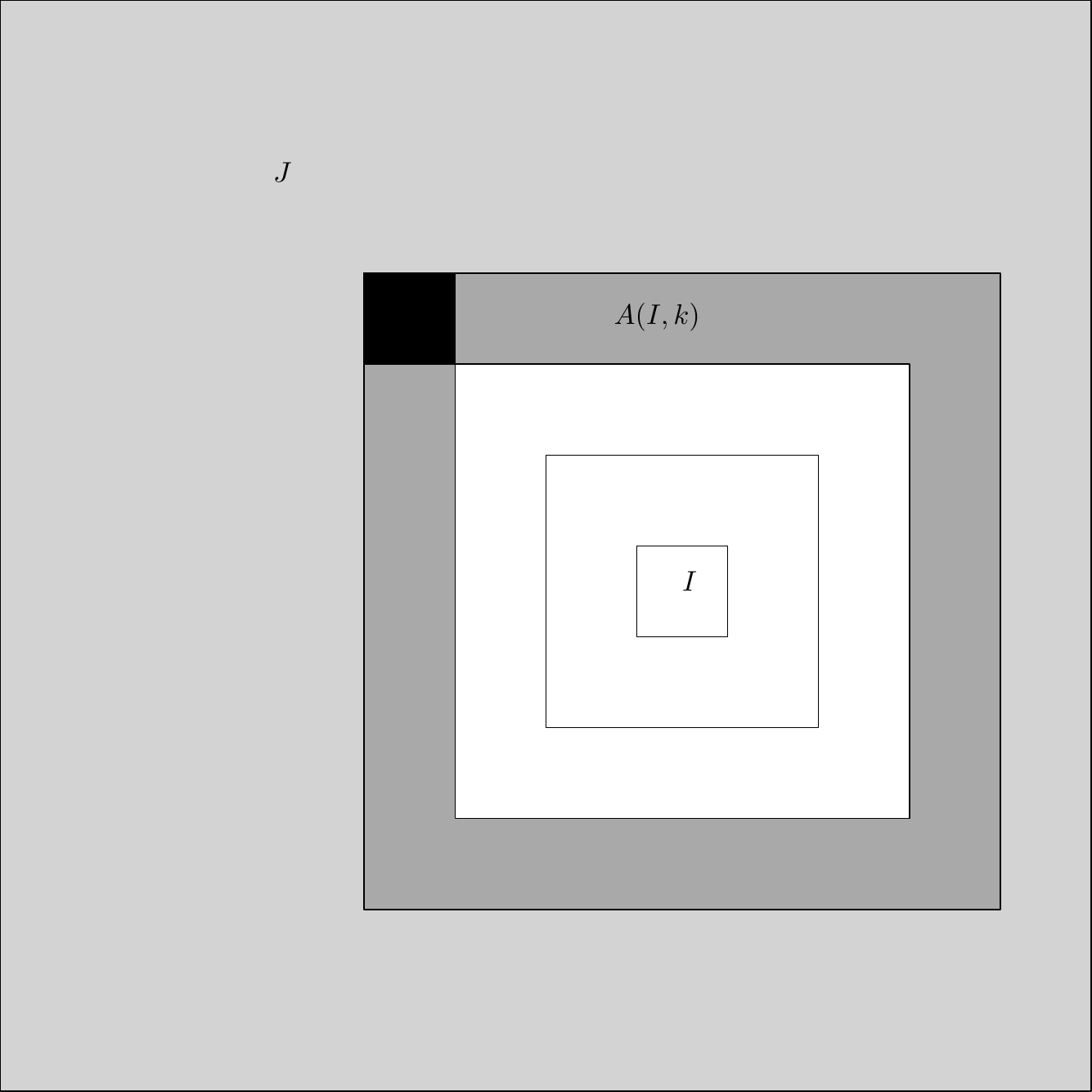}
		\caption{The case $\ell(J)>2k+1$. }
		 \label{fig:sub_big}
		    }
		    \end{subfigure}
	    \caption{This figure describes what happens in dimension $d=2$. In both figures, the black cube is the basic cube we choose to indicate $I\in B(J,k)$.}
	    \label{fig:clm2}
\end{figure}

Overall, we conclude that
\begin{align*}
&\#\bset{I\subset Q_0, \exists J\in\mathcal M, J\cap A(I,k)\neq\emptyset, \text{ and }\ell(J)>2^m}\le  \underset{J\in\mathcal M\atop \ell(J)>2^m}\sum \#B(J,k)\lesssim\underset{J\in\mathcal M\atop \ell(J)>2^m}\sum \bb{\ell(J)^d+k^{d-1}\ell(J)}\\
&\lesssim \underset{J\in\mathcal M\atop\ell(J)>2^m}\sum m_d(J)+k^{d-1}\underset{J\in\mathcal M\atop\ell(J)>2^m}\sum \ell(J)=\underset{\ell>m}\sum 2^{\ell\cdot d}n_\ell+k^{d-1}\underset{\ell>m}\sum 2^{\ell}n_\ell.
\end{align*}
Combining this with Claim \ref{clm:large_sqrs_measure}, we see that if $k^{d-1}\le\frac{\alpha N^d}{\underset{\ell\ge m}\sum 2^{\ell} n_\ell}$ and $\#\mathcal E\le \eps\cdot c_0N^d$, then there exists a constant $C_2$, which depends on the dimension alone, so that
\begin{align*}
&\#\bset{I\subset Q_0, \exists J\in\mathcal M, J\cap A(I,k)\neq\emptyset, \text{ and }\ell(J)>2^m}\le \cdots\le  C_2\underset{\ell> m}\sum 2^{\ell\cdot d}n_\ell+C_2k^{d-1}\underset{\ell\ge m}\sum 2^{\ell}n_\ell \\
&\le C_2\cdot \frac{C_1\#\mathcal E}\eps+C_2\alpha\cdot N^d\le N^d\bb{C_2C_1c_0+C_2 \alpha}=N^d\bb{\frac1{12}-\frac1{50}},
\end{align*}
as we assume that $\#\mathcal E\le \eps\cdot c_0\cdot N^d$, and by the way we defined $\alpha$. We therefore get that
\begin{align*}
&\#\bset{I\subset Q_0,\;\forall x\in Q\cap A(I,k) \text{ with }\rho(I(x))\le2^m}=\\
&=N^d\bb{1-\frac2{10^2d}}^d-\#\bset{I, \exists J\in\mathcal M, J\cap A(I,k)\neq\emptyset, \text{ and }\ell(J)>2^m}\\
&\ge N^d\bb{\bb{1-\frac2{10^2d}}^d-\bb{\frac1{12}-\frac1{50}}}>N^d\bb{1-\frac1{50}-\frac1{12}+\frac1{50}}=\frac{11}{12}N^d,
\end{align*}
concluding the proof.
\end{proof}
\subsubsection{Step 3:  Bounding $\sumit k 1 {\frac {N}{6d}}\frac 1{M(k)}$ from below}
\begin{claim}
$$
\sumit k 1 {\frac {N}{10^2d}}\frac 1{M(k)}\gtrsim \frac{N}{1+\bb{\frac{\#\mathcal E}{\eps\cdot N}}^{\frac1{d-1}}}\log^{\frac d{d-1}}\bb{2+\frac{\#\mathcal E}{\eps\cdot N}}.
$$
\end{claim}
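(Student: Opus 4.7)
The plan is to evaluate the sum by collecting contributions of $1/M(k)$ level by level, use Claim~\ref{clm:large_sqrs_measure} to bound a dual quantity, and then apply a power-mean inequality to extract both factors of the target.

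Set $L := N/(10^2 d)$. Since $M(k)=2^m$ exactly on $\{s_{m-1}<k\le s_m\}$ (with the conventions $s_{m_0-1}:=0$ and $s_{\bar m+1}:=L$), Abel summation gives
$$\sum_{k=1}^{L}\frac{1}{M(k)} \;=\; \sum_{m=m_0}^{\bar m+1}\frac{s_m-s_{m-1}}{2^m} \;=\; \frac{L}{2^{\bar m+1}} + \sum_{m=m_0}^{\bar m}\frac{s_m}{2^{m+1}}.$$
Writing $s_m = \Phi_m + 2^{m+2}$ with $\Phi_m := (\alpha N^d/T_m)^{1/(d-1)}$, $\sigma_m := \Phi_m/2^m$, and $M := \bar m - m_0 + 1$, this reduces to
$$\frac{L}{2^{\bar m+1}} \;+\; 2M \;+\; \frac{1}{2}\sum_{m=m_0}^{\bar m}\sigma_m,$$
so the crux is a lower bound on $\sum_m \sigma_m$.

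Observe that $\sigma_m^{-(d-1)} = T_m\cdot 2^{m(d-1)}/(\alpha N^d)$. Swapping the order of summation in $\sum_m T_m\, 2^{m(d-1)} = \sum_\ell 2^\ell n_\ell \sum_{m_0\le m\le \min(\ell,\bar m)} 2^{m(d-1)}$ and estimating the inner geometric sum by $\lesssim 2^{\ell(d-1)}$, Claim~\ref{clm:large_sqrs_measure} yields
$$\sum_{m=m_0}^{\bar m}\sigma_m^{-(d-1)} \;\lesssim\; \frac{1}{\alpha N^d}\sum_{\ell\ge m_0} 2^{\ell d} n_\ell \;\lesssim\; \frac{\#\mathcal E}{\eps\,\alpha\,N^d}.$$
The power-mean inequality with exponents $-(d-1)$ and $1$ (equivalently H\"older) then produces
$$\sum_{m=m_0}^{\bar m}\sigma_m \;\ge\; M^{d/(d-1)}\Bigl(\sum_m\sigma_m^{-(d-1)}\Bigr)^{-1/(d-1)} \;\gtrsim\; M^{d/(d-1)}\cdot N\cdot u^{-1/(d-1)},$$
where $u := \#\mathcal E/(\eps N)$. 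Together this gives $\sum_k 1/M(k) \gtrsim N/2^{\bar m+1} + M^{d/(d-1)}\,N\,u^{-1/(d-1)}$.

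Finally, verify that this beats the target $N\log^{d/(d-1)}(2+u)/(1+u^{1/(d-1)})$ via case analysis. If $u\lesssim 1$ the second term alone exceeds $N$ up to a constant. If $u\gg 1$, choose a constant $0<c<1/((d-1)\log 2)$: in the regime $M\ge c\log(2+u)$ the factor $M^{d/(d-1)}$ supplies the required $\log^{d/(d-1)}(2+u)$; otherwise $2^{\bar m+1}\lesssim u^{c\log 2}$, so the first term is $\gtrsim N\,u^{-c\log 2}$, and since $1/(d-1)-c\log 2>0$ this polynomial gain dominates the polylog in $u$ for $u$ large. The main obstacle I anticipate is recognizing that a convexity inequality relating $\sigma_m$ and $\sigma_m^{-(d-1)}$ is precisely what turns the single logarithm coming from the range of $m$ into the fractional power $\log^{d/(d-1)}$; once that is in place, the remaining steps are routine bookkeeping.
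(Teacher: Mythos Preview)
Your argument is correct and follows essentially the same route as the paper. The paper's Jensen inequality with $g(t)=t^{-1/(d-1)}$ is exactly your power-mean/H\"older step, and where you finish by a case split on the size of $M$ relative to $\log(2+u)$, the paper instead minimizes the two-term expression $2^{-\bar m}N+\bar m^{d/(d-1)}\bigl(\eps N^d/\#\mathcal E\bigr)^{1/(d-1)}$ over $\bar m$ by calculus; the extremizer sits at $\bar m\sim\log u$, so the two endings are equivalent.

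One small slip to fix: in your case $u\lesssim 1$ you say ``the second term alone exceeds $N$'', but when $u$ is small one typically has $s_{m_0}=L$, i.e.\ $M=0$, and then the second term vanishes. In that situation it is the first term $L/2^{\bar m+1}=L/2^{m_0}\sim N$ that delivers the bound (this is the case the paper singles out at the very start of its proof). With that correction the case analysis is complete.
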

\begin{proof}
If $s_{m_0}=\frac N{10^2d}$, then by the way the function $M$ is defined, for all $k$ we have $M(k)=2^{m_0}$, implying that,
$$
\sumit k 1 {\frac {N}{10^2d}} \frac1{M(k)}=\sumit k 1 {\frac {N}{10^2d}} \frac1{2^{m_0}}\sim N,
$$
where the constant depends on $m_0$, which depends on $r_0$ since this is how $m_0$ was defined. This yields a longer sequence than the one indicated in the theorem. Otherwise, assume that $s_{m_0}<\frac N{10^2d}$. Since the function $M$ was defined as a step function,
\begin{eqnarray*}
\sumit k 1 {\frac {N}{10^2d}} \frac1{M(k)}&=&\frac{s_{m_0}}{2^{m_0}}+\sumit m {m_0+1} {\bar m+1}2^{-m}\bb{s_m-s_{m-1}}=\frac{s_{m_0}}{2^{m_0}}+\sumit m {m_0+1} {\bar m+1}2^{-m}s_m-\sumit m {m_0+1} {\bar m+1}2^{-m}s_{m-1}\\
&=&\sumit m {m_0} {\bar m+1}2^{-m}s_m-\frac12\sumit m {m_0} {\bar m}2^{-m}s_m=\frac12\sumit m {m_0} {\bar m}2^{-m}s_m+2^{-\bar m-1}s_{\bar m+1}\sim N\cdot 2^{-\bar m}+\sumit m {m_0} {\bar m}\frac{s_m}{2^m}.
\end{eqnarray*}
To bound $\sumit m {m_0} {\bar m}\frac{s_m}{2^m}$ from bellow, we use Jensen's inequality with the convex function $g(t)=t^{-\frac1{d-1}}$:
\begin{eqnarray*}
\sumit m {m_0}{\bar m}\frac{s_m}{2^m}&=&\sumit m {m_0}{\bar m}g\bb{\frac{2^{m(d-1)}}{s_m^{d-1}}}=\bb{\bar m-m_0+1}\frac{\sumit m {m_0}{\bar m}g\bb{\frac{2^{m(d-1)}}{s_m^{d-1}}}}{\bar m-m_0+1}\\
&\ge&\bb{\bar m-m_0+1}g\bb{\frac{\sumit m {m_0}{\bar m}\frac{2^{m(d-1)}}{s_m^{d-1}}}{\bar m-m_0+1}}= \frac{\bb{\bar m-m_0+1}^{1+\frac1{d-1}}}{\bb{\sumit m {m_0} {\bar m}\frac{2^{m(d-1)}}{s_m^{d-1}}}^{\frac1{d-1}}}.
\end{eqnarray*}
To bound $\sumit m {m_0} {\bar m}\frac{2^{m(d-1)}}{s_m^{d-1}}$ from above, we note that by definition 
$$
s_m=\bb{\frac{\alpha\cdot N^d}{\underset{\ell\ge m}\sum 2^{\ell} n_\ell}}^{\frac1{d-1}}+2^{m+2}\ge \bb{\frac{\alpha\cdot N^d}{\underset{\ell\ge m}\sum 2^{\ell} n_\ell}}^{\frac1{d-1}}.
$$
This implies that
\begin{eqnarray*}
\sumit m{m_0}{\bar m}\frac{2^{m(d-1)}}{s_m^{d-1}}\le\frac1{\alpha N^d}\sumit m{m_0}{\bar m} 2^{m(d-1)}\underset{\ell\ge m}\sum 2^{\ell} n_\ell=\frac1{\alpha N^d}\underset{\ell\ge m_0}\sum 2^{\ell} n_\ell\sumit m{m_0}\ell 2^{m(d-1)}\sim \frac1{\alpha N^d}\underset{\ell\ge m_0}\sum 2^{d\cdot \ell} n_\ell\sim\frac{\#\mathcal E}{\eps\cdot N^d},
\end{eqnarray*}
following Claim \ref{clm:large_sqrs_measure}. Then, using the estimate above,
\begin{eqnarray*}
\sumit m {m_0}{\bar m}\frac{s_m}{2^m}&\ge& \frac{\bb{\bar m-m_0+1}^{1+\frac1{d-1}}}{\bb{\sumit m {m_0} {\bar m}\frac{2^{m(d-1)}}{s_m^{d-1}}}^{\frac1{d-1}}}\gtrsim \bar m^{1+\frac1{d-1}}\cdot\bb{\frac{\eps\cdot N^d}{\#\mathcal E}}^{\frac1{d-1}},
\end{eqnarray*}
implying that
\begin{eqnarray*}
\sumit k 1 {\frac {N}{6d}} \frac1{M(k)}\sim2^{-\bar m}\cdot N+\sumit m {m_0}{\bar m}\frac{s_m}{2^m}\gtrsim 2^{-\bar m}\cdot N+\bar m^{1+\frac1{d-1}}\cdot\bb{\frac{\eps\cdot N^d}{\#\mathcal E}}^{\frac1{d-1}}.
\end{eqnarray*}
We would like our lower bound to hold for every $\bar m$. To find the optimal inequality, we define the function
$$
\varphi(x):=2^{-x}+x^{1+\frac1{d-1}}\cdot\bb{\frac{N}{\eps\cdot\#\mathcal E}}^{\frac1{d-1}}.
$$
As we know nothing about $\bar m$, we will look for the absolute minimum of $\varphi$ in $\left[1,\frac N{10^2d}\right]$ and use whatever inequality this minimum satisfies:
$$
\varphi'(x)=-2^{-x}\log(2)+\bb{1+\frac1{d-1}}\bb{\frac{\eps\cdot N}{\#\mathcal E}}^{\frac1{d-1}}\cdot x^{\frac1{d-1}}=0\iff x^{\frac1{d-1}}2^x\sim\bb{\frac{\#\mathcal E}{\eps\cdot N}}^{\frac1{d-1}}
$$
and the later is a minimum point since $\varphi''\ge 0$. The lower bound this minimum produces is
$$
\frac{N}{1+\bb{\frac{\#\mathcal E}{\eps\cdot N}}^{\frac1{d-1}}}\log^{\frac d{d-1}}\bb{2+\frac{\#\mathcal E}{\eps\cdot N}},
$$
concluding the proof.
\end{proof}
\section{Examples}\label{sec:examples}
In this section we find a few examples of families of functions satisfying the requirements of the theorem, which are not the traditional subharmonic functions, treated already in  \cite{MyJM2020}.

\subsection{Examples related to Harnack's inequality}
In this subsection we will discuss two classes of examples. In both cases, the weighted mean value property arises from some form of Harnack's inequality, and in both we are dealing with solutions to special differential equations. Lastly, we will show in both examples that they belong to $\mathcal F(c,c,m_d)$ where $c$ is a constant which depends on the parameters of the differential equation.

Let $a:\R^d\rightarrow \R^{d\times d}$ be so that
\begin{enumerate}[label=$\bullet$]
\item For every $x\in\R^d$, $a(x)=\bset{a_{j,k}(x)}_{j,k=1}^d$ is a symmetric positive definite matrix, whose eigenvalues lie between $\lambda\inv$ and $\lambda$.
\item For every $1\le j,k\le d$ the function $x\mapsto a_{j,k}(x)$ is Lebesgue integrable.
\end{enumerate}

\subsubsection{Special elliptic differential equations}
In a paper from 1961, \cite{Moser1961}, J. Moser found a large class of functions which satisfy Harnack's inequality. Following Observation \ref{obs:Harnack}, all the functions in this class belong to $\mathcal F(c,c,m_d)$. Moser studied all the solutions of the uniformly elliptic differential equation induced by $a$ described above:
\begin{equation}\label{eq:Moser}
\underset{1\le j,k\le d}\sum \frac{\partial}{\partial x_j}\bb{a_{j,k}(x)\cdot\frac{\partial u}{\partial x_k}}=0.
\end{equation}
Let $D\subset\R^d$ be a domain. A function $u:\R^d\rightarrow\R$ which satisfies that
$$
\underset D\int\; \bb{u^2+\norm{\nabla u}{}^2}dm_d<\infty
$$
is called a {\it (weak) solution of (\ref{eq:Moser})} in a domain $D$ if for every continuously differentiable compactly supported function $\phi$,
$$
\underset D\int\; \binr{\nabla\phi}{a\nabla u}dm_d=\underset{1\le j,k\le d}\sum\; \underset D\int\; \frac{\partial \phi}{\partial x_j}\cdot a_{j,k}(x)\cdot\frac{\partial u}{\partial x_k}=0.
$$
\begin{thm}\cite[Theorem 1]{Moser1961}
If $u$ is a positive solution of (\ref{eq:Moser}) in a domain $D$ and $D'\subset D$ is a compact subset of $D$, then there exists a constant which depends on $\lambda, d, D, D'$ so that
$$
\underset{x\in D'}\min\; u(x)\le c\cdot \underset{x\in D'}\max\; u(x).
$$
\end{thm}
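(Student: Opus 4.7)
The plan is to follow Moser's iteration technique based on testing the weak formulation of the equation against carefully chosen powers of $u$. Because $u$ is strictly positive, for any real $\beta\neq 0$ one may substitute the test function $\phi = \eta^2 u^{2\beta - 1}$ (with $\eta$ a smooth compactly supported cutoff) into the weak equation. Using the uniform ellipticity bounds $\lambda^{-1}\le a(x)\le \lambda$ to compare the quadratic form $\langle\nabla\phi, a\nabla u\rangle$ with $u^{2\beta-2}\eta^2|\nabla u|^2$, and Cauchy--Schwarz to absorb the cross terms coming from $\nabla\eta$, one obtains a Caccioppoli-type energy inequality
\[
\int \eta^2\,|\nabla u^\beta|^2\, dm_d \;\lesssim\; C(\lambda,\beta)\int u^{2\beta}\,|\nabla\eta|^2\, dm_d.
\]

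Combining this with the Sobolev embedding $W^{1,2}\hookrightarrow L^{2d/(d-2)}$ on the support of $\eta$ yields a reverse H\"older self-improvement: on any pair of concentric balls $B_{r'}\subset B_r\subset D$,
\[
\Bigl(\frac{1}{m_d(B_{r'})}\int_{B_{r'}} u^{2\beta\cdot d/(d-2)}\, dm_d\Bigr)^{\!\frac{d-2}{2\beta d}}
\;\lesssim\; C(\lambda,d,r,r',\beta)\Bigl(\frac{1}{m_d(B_r)}\int_{B_r} u^{2\beta}\, dm_d\Bigr)^{\!\frac{1}{2\beta}}.
\]
Iterating this estimate on a geometrically shrinking sequence of balls with $\beta>0$ growing as $\bigl(d/(d-2)\bigr)^k$ produces the upper bound $\sup_{B_{r/2}} u \lesssim \bigl(\frac{1}{m_d(B_r)}\int_{B_r} u^p\, dm_d\bigr)^{1/p}$ for any fixed $p>0$. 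Running the same iteration with negative $\beta$ --- crucially possible because $u$ stays bounded away from zero on compact sets where $\eta\neq 0$ --- produces a matching lower bound $\inf_{B_{r/2}} u \gtrsim \bigl(\frac{1}{m_d(B_r)}\int_{B_r} u^{-p}\, dm_d\bigr)^{-1/p}$.

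The two halves must now be bridged, and this is the heart of Moser's argument. Testing against $\phi = \eta^2/u$ and applying ellipticity yields the gradient estimate $\int \eta^2\,|\nabla\log u|^2\, dm_d \lesssim \int |\nabla\eta|^2\, dm_d$; combined with the Poincar\'e inequality this shows $\log u\in\mathrm{BMO}(B_r)$ with norm depending only on $\lambda$ and $d$. The John--Nirenberg inequality then supplies a small $p_0=p_0(\lambda,d)>0$ so that
\[
\Bigl(\frac{1}{m_d(B_r)}\int_{B_r} u^{p_0}\, dm_d\Bigr)^{\!1/p_0}\,\Bigl(\frac{1}{m_d(B_r)}\int_{B_r} u^{-p_0}\, dm_d\Bigr)^{\!1/p_0}\le C(\lambda,d).
\]
Chaining this with the two iteration bounds yields the ball-Harnack inequality $\sup_{B_{r/2}} u \le c(\lambda,d)\cdot \inf_{B_{r/2}} u$.

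Finally, to pass from Harnack on a single ball to the stated inequality on the compact subdomain $D'$ (read as $\sup_{D'} u \le c\cdot \inf_{D'} u$, the nontrivial direction of what is written), a standard chaining argument suffices: since $\overline{D'}\subset D$ is compact, there is an $r_0>0$ with $B(x,2r_0)\subset D$ for every $x\in D'$, and any two points of $D'$ can be joined by a chain of at most $N=N(D,D')$ overlapping balls $B(x_i, r_0/2)$; iterating the ball-Harnack estimate along the chain produces $c = c(\lambda,d)^N = c(\lambda,d,D,D')$. I expect the main obstacle to be the BMO/John--Nirenberg step --- both in making the singular test function $\eta^2/u$ rigorous (which requires truncating via $\min(u, k)$ and passing to the limit) and in carefully tracking the dependence of constants on $\lambda$ and $d$ through the iteration; the Moser iteration itself is essentially bookkeeping once the Caccioppoli inequality is in hand.
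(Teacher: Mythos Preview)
The paper does not prove this theorem at all; it is quoted verbatim as \cite[Theorem 1]{Moser1961} and used as a black box to conclude that positive solutions of (\ref{eq:Moser}) lie in $\mathcal F(1,c,m_d)$. There is therefore nothing in the paper to compare your argument against. Your sketch is a faithful outline of Moser's original 1961 iteration proof --- Caccioppoli estimate from testing against $\eta^2 u^{2\beta-1}$, Sobolev self-improvement, iteration to $L^\infty$ for positive and negative exponents, the $\log u\in\mathrm{BMO}$ bridge via John--Nirenberg, and finally chaining over balls to pass to a general compact $D'$ --- and you correctly flag that the displayed inequality is stated in the trivial direction and must be read as $\sup_{D'} u\le c\cdot\inf_{D'} u$.
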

If $u$ is a solution of (\ref{eq:Moser}) in $\R^d$, then for every ball $B$, $u$ is a solution in a ball twice its size, and by rescaling the constant above does not depend on $B$ at all.

Following Observation \ref{obs:Harnack}, we see that if we fix the dimension and $\lambda$, then every weak solution of (\ref{eq:Moser}) belongs to $\mathcal F(c,c,m_d)$ for some constant $c$, which depends on the dimension, $d$, and on $\lambda$. In fact, since weak solutions of linear elliptic operators satisfy the maximum principle (see e.g. \cite[Theorem 8.1]{Gilbarger2001}), every such solution will belong to $\mathcal F(1,c,m_d)$.

\subsubsection{Special parabolic differential equations}
In another paper a few years later, Moser studied solutions of a `heat equation with weighted laplacian'. More accurately, he looked at non-negative solutions $u:\R^{d+1}\rightarrow\R_+$ of the parabolic differential equation
\begin{equation}\label{eq:Moser_heat}
\frac{\partial u}{\partial t}+\underset{1\le j,k\le d}\sum \frac{\partial}{\partial x_j}\bb{a_{j,k}(t,x)\cdot\frac{\partial u}{\partial x_k}}=0.
\end{equation}
Note that here $a=\bset{a_{j,k}(t,x)}_{j,k=1}^d:\R\times\R^d\rightarrow\R^{d\times d}$ satisfies the requirements described in the beginning of this subsection. A {\it weak solution of the differential equation (\ref{eq:Moser_heat}) in a domain $D$} is defined as a function for which the first derivatives, $u_t,u_{x_1},{\cdots}, u_{x_d}$ are square integrable in $D$ and which satisfies that for every continuously differentiable compactly supported function $\phi=\phi(t,x)$,
$$
\underset D\int\; \phi\cdot\frac{\partial u}{\partial t}+\binr{\nabla_x\phi}{a\nabla_x u}dm_d=\underset D\int\;\phi\cdot\frac{\partial u}{\partial t}dtdx+ \underset{1\le j,k\le d}\sum\; \underset D\int\; \frac{\partial \phi}{\partial x_j}\cdot a_{j,k}(x)\cdot\frac{\partial u}{\partial x_k}dtdx=0.
$$
As mentioned before, it is known that solutions of parabolic differential equations satisfy a weak maximum principle, and in particular would satisfy Condition (\ref{eq:max_pric}) with $A=1$. To show that a weighted mean value property holds we will need some kind of Harnack's principle. However, the Harnack type inequality Moser obtained in \cite{Moser1964} is not the standard one.
\begin{thm}\cite[Theorem 1]{Moser1964}\label{thm:Moser_heat}
Let $R=[0,T]\times[0,\rho]^d\subset\R^{d+1}$ be a rectangle. Given $0<\tau_1<\tau_2<\tau<T$ and $\rho'<\rho$ we define
$$
R_-:=[\tau_1,\tau_2]\times[0,\rho']^d,\; R_+:=[\tau,T]\times[0,\rho']^d
$$
(see Figure \ref{fig:Moser_a} bellow). There exists a constant $c$ which depends on $\lambda, d, \tau_1,\tau_2,\tau,T,\rho,\rho'$ so that for every weak positive solution of (\ref{eq:Moser_heat}) in $R$, 
$$
\underset{R_-}\max\; u\le c\cdot\underset{R_+}\min\; u.
$$
\end{thm}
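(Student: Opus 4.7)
The plan is to follow Moser's iteration scheme adapted to the parabolic equation \eqref{eq:Moser_heat}, splitting the argument into three stages: $L^p\to L^\infty$ bounds via Moser iteration (applied both to $u$ and, through the observation that $u^{-1}$ is a supersolution of a similar equation, to $u^{-1}$); a logarithmic estimate for $v:=\log u$ controlling its oscillation in a one-sided parabolic sense; and a parabolic John--Nirenberg argument bridging the positive $L^{p}$ mean of $u$ over $R_-$ and the $L^{-p}$ mean of $u$ over $R_+$, which crucially exploits the time gap $\tau_2<\tau$.

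For the first stage, I would substitute $\phi=\eta^2 u^{p-1}$ into the weak formulation, with $\eta$ a smooth cutoff supported in a parabolic subcylinder. The ellipticity bounds $\lambda^{-1}\le a\le\lambda$ together with Young's inequality yield a Caccioppoli-type estimate controlling $\sup_t\int\eta^2 u^p\,dx$ and $\iint \eta^2|\nabla u^{p/2}|^2$ in terms of $\iint(|\nabla\eta|^2+|\partial_t\eta^2|)u^p$. Combined with the spatial Sobolev inequality and elementary interpolation between $L^{\infty}_t L^{2}_x$ and $L^{2}_t L^{2^*}_x$, this gives a parabolic reverse Hölder inequality with gain $\chi=1+2/d$ on shrinking cylinders. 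Iterating along a geometric sequence of such cylinders and sending $p\to+\infty$ (respectively $p\to-\infty$, after running the same iteration on $u^{-1}$) produces, for any small $p_0,q_0>0$ and slightly enlarged rectangles $\widetilde R_\pm$,
\[
\max_{R_-}u\;\lesssim\;\left(\frac{1}{|\widetilde R_-|}\iint_{\widetilde R_-}u^{p_0}\right)^{\!1/p_0}\!, \qquad \min_{R_+}u\;\gtrsim\;\left(\frac{1}{|\widetilde R_+|}\iint_{\widetilde R_+}u^{-q_0}\right)^{\!-1/q_0}\!.
\]

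For the second stage, testing \eqref{eq:Moser_heat} with $\phi=\eta^2/u$ and using $u^{-1}\partial_t u=\partial_t v$, together with the ellipticity of $a$, yields an estimate of the form
\[
\iint\eta^2|\nabla v|^2\,dx\,dt\;+\;\sup_t\int\eta^2\,v(t,\cdot)\,dx\;\lesssim\;\iint\bigl(|\nabla\eta|^2+|\partial_t\eta^2|\bigr)\,dx\,dt.
\]
Combined with a spatial Poincaré inequality, this is the parabolic analogue of saying $v\in\mathrm{BMO}$, but with a key directional feature: the mean of $v$ is controlled only as $t$ increases. Stage three then upgrades this, via a space-time Calderón--Zygmund decomposition that respects the time ordering, into a one-sided parabolic John--Nirenberg inequality of the form
\[
\left(\frac{1}{|R_-|}\iint_{R_-}u^{p}\right)^{\!1/p}\le\; C\left(\frac{1}{|R_+|}\iint_{R_+}u^{-p}\right)^{\!-1/p}
\]
for some universal small $p>0$ (depending on $\lambda$, $d$, and the geometry of $R_\pm\subset R$). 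Chaining this with the bounds of stage one yields the asserted inequality $\max_{R_-}u\le c\min_{R_+}u$.

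The main obstacle is stage three. Unlike in the elliptic setting, the BMO-type bound for $\log u$ is inherently one-sided in time, so the usual John--Nirenberg lemma has to be re-proved in a directional form; this is also what forces the strict separation $\tau_2<\tau$ in the hypothesis. The covering argument uses the strict time gap to propagate the super-level sets of $\log u$ forward, and elementary examples (e.g.\ a heat kernel translated along the time axis) show that no inequality of this form can hold without a positive waiting time.
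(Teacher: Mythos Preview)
The paper does not prove this theorem at all: it is quoted verbatim as \cite[Theorem 1]{Moser1964} and used as a black box in the subsequent Observation~\ref{obs:rec_to_ball}. So there is no ``paper's own proof'' to compare your proposal against.

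That said, your sketch is a faithful outline of Moser's original 1964 argument, with the three standard stages (iteration to $L^\infty$ from both sides, the logarithmic Caccioppoli estimate for $v=\log u$, and the one-sided parabolic John--Nirenberg/Bombieri--Giusti step bridging the positive and negative exponents). Your identification of stage three as the crux, and of the time gap $\tau_2<\tau$ as essential and not removable, is correct. For the purposes of this paper none of that detail is needed; the theorem is simply invoked.
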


\begin{obs}\label{obs:rec_to_ball}
Let $u$ be a weak positive solution of (\ref{eq:Moser_heat}). Then $u$ satisfies a weighted mean value property with $\mu=m_{d+1}$ and a constant which depends on the parameters of the equation.
\end{obs}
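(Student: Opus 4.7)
The goal is to show that for $(t_0,x_0)\in\R^{d+1}$ and a Euclidean ball $B=B((t_0,x_0),r)$ with $r>r_0$, we have $u(t_0,x_0)\le C\cdot\frac{1}{m_{d+1}(B)}\int_B u\,dm_{d+1}$ for a suitable constant $C$. The plan is to reduce this to the parabolic Harnack inequality from Theorem \ref{thm:Moser_heat} via a geometric inscription, followed by exploiting the non-negativity of $u$.

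First, I would inscribe inside $B$ a space-time rectangle $\mathcal R$ together with two sub-rectangles $R_-$ and $R_+$: the rectangle $R_-$ should be a small neighbourhood of $(t_0,x_0)$ lying in the ``past'' portion of $\mathcal R$, while $R_+$ sits in the ``future'' portion with the time-separation required by the hypotheses of Theorem \ref{thm:Moser_heat}. A convenient concrete choice is $\mathcal R=[t_0-T,t_0+T]\times\prod_{j=1}^d[x_{0,j}-\rho,x_{0,j}+\rho]$ with $T$ and $\rho$ proportional to $r$ through fixed dimensional constants, ensuring $R_-,R_+\subset\mathcal R\subset B$. Moser's theorem then gives $u(t_0,x_0)\le\max_{R_-}u\le c\cdot\min_{R_+}u$, where $c$ is Moser's Harnack constant, depending on the parameters of the equation and on the fixed shape ratios of the sub-rectangles.

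Second, because $u\ge 0$ and $R_+\subset B$, we have $\min_{R_+}u\le\frac{1}{m_{d+1}(R_+)}\int_{R_+}u\,dm_{d+1}\le\frac{1}{m_{d+1}(R_+)}\int_B u\,dm_{d+1}$. Combining the two steps yields the desired weighted mean-value inequality with constant $C$ proportional to $c\cdot m_{d+1}(B)/m_{d+1}(R_+)$. Condition (\ref{eq:max_pric}) with $A=1$ is the classical maximum principle for weak solutions of parabolic equations applied to convex domains, so no additional work is required there.

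The main obstacle is ensuring that the resulting constant $C$ is uniform in $r$. The equation (\ref{eq:Moser_heat}) is scale-invariant under the parabolic rescaling $(t,x)\mapsto(\beta^2 t,\beta x)$ rather than the Euclidean one $(t,x)\mapsto(\beta t,\beta x)$, which creates a genuine tension: parabolically-shaped sub-rectangles keep Moser's constant $c$ uniform under rescaling but yield a volume ratio $m_{d+1}(B)/m_{d+1}(R_+)$ that grows with $r$, while Euclidean-shaped rectangles fix the volume ratio at the cost of making $c$ depend on the scale through the rescaled ellipticity bound. Balancing these two demands — perhaps by iterating Moser's Harnack along a controlled chain of rectangles that together cover a positive fraction of $B$, or by carefully tracking the dependence of $c$ on the rescaled equation in the class with a fixed $\lambda$ — is the crux of the argument, and the reason the constant ends up depending on the parameters of the equation rather than on the dimension alone.
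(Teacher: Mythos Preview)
Your two-step outline---inscribe Euclidean-shaped rectangles $R_-\ni(t_0,x_0)$ and $R_+$ inside the ball $B$, apply Theorem~\ref{thm:Moser_heat} to get $u(t_0,x_0)\le\max_{R_-}u\le c\cdot\min_{R_+}u$, then pass from $\min_{R_+}u$ to the average over $B$ via non-negativity and the volume ratio $m_{d+1}(B)/m_{d+1}(R_+)$---is exactly the paper's argument, with essentially the same geometric choices (the paper takes $T=\rho=r/\sqrt{d+1}$ and places $R_-,R_+$ at fixed fractions of this).

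Where you diverge from the paper is your final paragraph. The paper disposes of the uniformity-in-$r$ question in one sentence: it asserts that ``the theorem is scale invariant and once we rescale everything to have $r=1$ we see that all the rest of the parameters are constants which depend on the dimension alone,'' and from this concludes that $c=c(\lambda,d)$. It does not distinguish parabolic from Euclidean scaling, does not iterate Harnack along a chain, and does not track how the ellipticity bound transforms under a non-parabolic rescale. So the ``crux'' you isolate is not resolved by any further mechanism in the paper; the paper simply takes the scale-invariance claim as given. Your caution about the parabolic-versus-Euclidean mismatch (with $T\sim\rho\sim r$ the normalized coefficients have size $\sim r^{-1}$, and Moser's constant depends on that absolute size, not just on the ratio $\lambda$) is well placed---but you will not find the missing step by looking harder at the paper's proof, because it is not there.
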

\begin{proof}
Fix $(t,x)\in \R\times \R^d$ and $r>0$ and define the rectangle
$$
R:=(x,t)+\sbb{-\frac r{\sqrt{d+1}},\frac r{\sqrt{d+1}}}^{d+1}.
$$
Note that $R\subset B((t,x),r)$. We define the parameters
$$
\tau_1:=-\frac r{4\sqrt{d+1}},\; \tau_2:=-\tau_1=\frac r{4\sqrt{d+1}}, \; \tau:=\frac r{2\sqrt{d+1}},\; \rho'=\frac {3r}{4\sqrt{d+1}},
$$
while, according to the way $R$ was defined, $\rho=T=\frac r{\sqrt{d+1}}$. Note that $(t,x)\in R_-$ and that $R_-, R_+\subset R\subset B((t,x),r)$ (see Figure \ref{fig:Moser_b} below). Following Theorem \ref{thm:Moser_heat}, there exists a constant $c$ which {may depend only} on $\lambda$ and the dimension $d$ so that
$$
\underset{R_-}\max\; u\le c\cdot\underset{R_+}\min\; u.
$$
We point out that the reason why it does not depend on the rest of the parameters is since the theorem is scale invariant and once we rescale everything to have $r=1$ we see that all the rest of the parameters are constants which depend on the dimension alone. Next, we note that $\frac{m_{d+1}(B((t,x),r))}{m_{d+1}(R_+)}$ is another constant which depends on the dimension alone. We see that by inclusion,
\begin{align*}
u(t,x)&\le \underset{R_-}\max\; u\le c\cdot\underset{R_+}\min\; u=\frac c{m_{d+1}(R_+)}\integrate {R_+}{}{\underset{R_+}\min\;  u(y)}{m_{d+1}}\le \frac c{m_{d+1}(R_+)}\integrate {R_+}{}{u(y)}{m_{d+1}}\\
&\le \frac{m_{d+1}(B((t,x),r))}{m_{d+1}(R_+)}\cdot \frac c{m_{d+1}(B((t,x),r))}\integrate {B((t,x),r)}{}{u(y)}{m_{d+1}}\le \frac {c'}{m_{d+1}(B((t,x),r))}\integrate {B((t,x),r)}{}{u(y)}{m_{d+1}},
\end{align*}
where $c'$ is a constant which depends on $\lambda$ and the dimension alone, concluding the proof.
\end{proof}

\begin{figure}[!ht]
	 \center
	    \begin{subfigure}[b]{0.49\textwidth}
		{
		  \centering
	      	\includegraphics[width=0.9\linewidth]{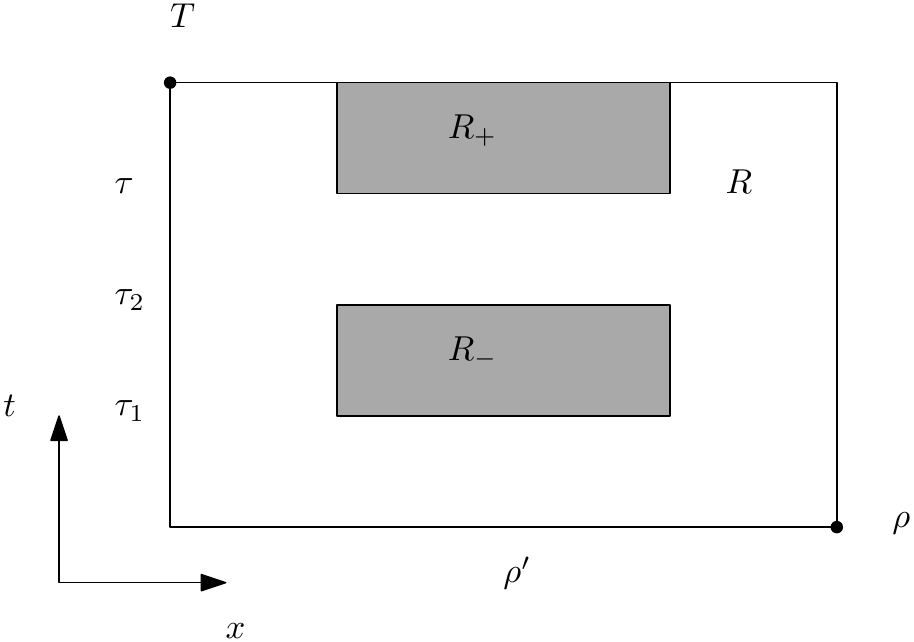}
		\caption{The rectangles $R,\; R_-,\; R_+$ {in} Theorem \ref{thm:Moser_heat}.}
		\label{fig:Moser_a}
 		   }
	    \end{subfigure}
	    \begin{subfigure}[b]{0.49\textwidth}
		    {
		      \centering
	        	\includegraphics[width=0.6\linewidth]{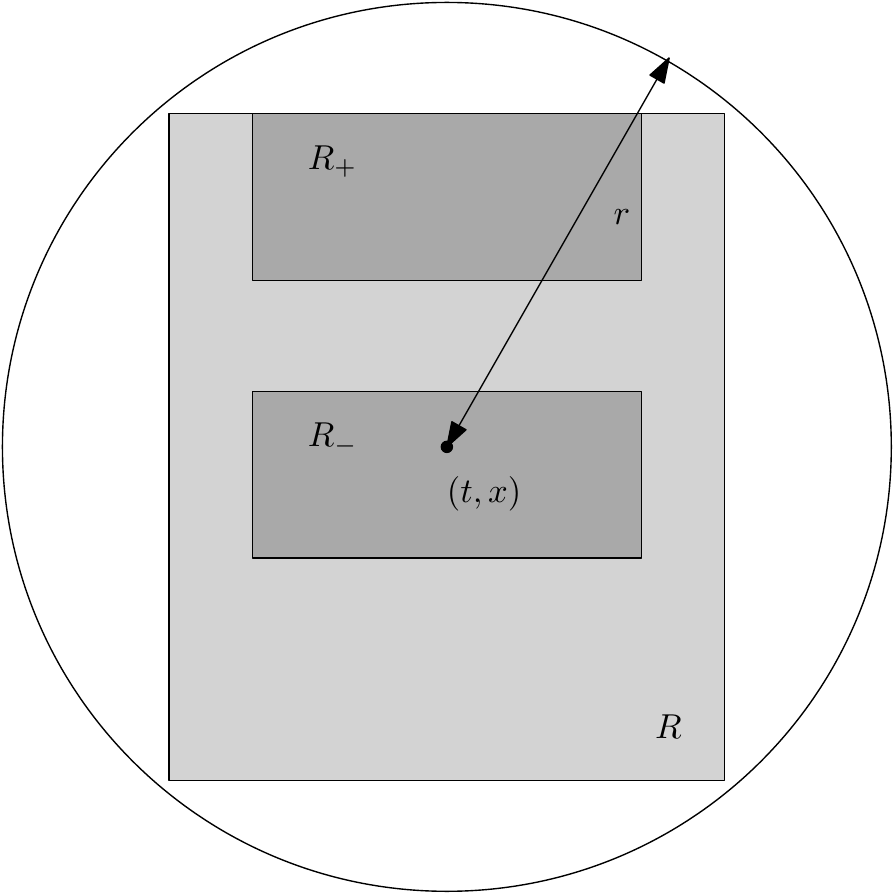}
		\caption{The ball $B((t,x),r)$ and the rectangles $R,\; R_-,\; R_+$ in the observation above. }
		 \label{fig:Moser_b}
		    }
		    \end{subfigure}
		    \caption{This figure describes the rectangles $R,R_-,R_+$ and how they relate with the ball $B((t,x),r)$ in the proof of Observation \ref{obs:rec_to_ball}.}
\end{figure}
Following the observation above, we see that if we fix the dimension and $\lambda$, then every weak solution of (\ref{eq:Moser_heat}) belongs to $\mathcal F(1,c,m_{d+1})$ for some constant $c$, which depends on the dimension and $\lambda$ alone.

\subsection{Functions satisfying a weighted average property}
The second class of examples we present have $A=B=1$ but the measure $\mu$ is not Lebesgue's measure.

In a paper from 1965, \cite{Bose1965}, A.K Bose investigated functions satisfying a weighted average property. More precisely, given a nonnegative locally bounded function $\omega:\R^d\rightarrow\R_+$ he looked at the collection of functions satisfying
\begin{equation}\label{eq:w-harmonic}
u(x)=\frac{\int_{B(x,r)} u(y)\cdot \omega(y)dm_d(y)}{\int_{B(x,r)} \omega(y)dm_d(y)}.
\end{equation}
Note that if $\omega=const\neq 0$ then the collection of functions satisfying (\ref{eq:w-harmonic}) is precisely the set of harmonic functions. However, if $\omega$ is not constant we get a different collection of functions.

He showed several interesting properties of functions satisfying (\ref{eq:w-harmonic}). First, a maximum principle holds:

\begin{thm}\cite[Theorem 6]{Bose1965}
Let $\Omega\subset\R^d$. If $u$ satisfies (\ref{eq:w-harmonic}) in $\Omega$, then $u$ cannot assume a maximum (minimum) at a point in $\Omega$ unless $u$ is constant in $\Omega$.
\end{thm}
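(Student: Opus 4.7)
The plan is to establish the strong maximum principle via the classical open--closed argument, letting the weighted mean value identity \eqref{eq:w-harmonic} supply the local rigidity. I will assume $\Omega$ is connected (otherwise the argument is applied on each connected component separately) and set $M := \underset{\Omega}\sup\; u$. Defining $S := \bset{x\in\Omega\,:\,u(x)=M}$, it suffices to show $S$ is both relatively open and relatively closed in $\Omega$; connectedness then forces $S=\emptyset$ or $S=\Omega$, and the latter means $u\equiv M$.

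Openness is where the averaging identity does the real work. Fix $x_0 \in S$ and take any $r>0$ small enough that $B(x_0,r)\subset\Omega$ and $\int_{B(x_0,r)}\omega\, dm_d>0$ (which must be available for \eqref{eq:w-harmonic} to make sense at $x_0$). Substituting $u(x_0)=M$ into \eqref{eq:w-harmonic} and rearranging gives
$$
\int_{B(x_0,r)} \bb{M - u(y)}\cdot\omega(y)\, dm_d(y) = 0.
$$
Since $M-u\ge 0$ and $\omega\ge 0$ pointwise, the integrand vanishes $m_d$-a.e.\ on $B(x_0,r)$; in particular $u=M$ almost everywhere on $\bset{\omega>0}\cap B(x_0,r)$. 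Reapplying \eqref{eq:w-harmonic} at an arbitrary $y\in B(x_0,r)$ with a small radius chosen so that the corresponding ball has positive $\omega$-mass then forces $u(y)=M$ as well, so $u\equiv M$ on an entire neighbourhood of $x_0$, proving openness.

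Closedness follows once one knows $u$ is continuous, which is a standard consequence of \eqref{eq:w-harmonic}: the identity implies $u$ is locally bounded, and then comparing the weighted averages centered at $x$ and $x'$ via dominated convergence (as $x'\to x$) yields continuity. Bose establishes exactly this regularity earlier in \cite{Bose1965}, and I would invoke it as a black box. With continuity in hand, $S=u^{-1}(\bset M)$ is relatively closed in $\Omega$, which completes the open--closed dichotomy. The minimum case follows at once by applying the same argument to $-u$, which also satisfies \eqref{eq:w-harmonic} by the linearity of the identity in $u$.

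The main subtlety I expect to confront is that $\omega$ may vanish on sets of positive measure, so the a.e.\ equality $u=M$ on $\bset{\omega>0}\cap B(x_0,r)$ does not immediately propagate to a pointwise equality on all of $B(x_0,r)$. The remedy is precisely the self-improving nature of \eqref{eq:w-harmonic}: the identity must hold at every point of $B(x_0,r)$ for at least one admissible radius, and each such application pulls the value $M$ through the averaging. This chaining, plus the continuity provided by the mean value property itself, is the technical heart of the argument; the rest is bookkeeping.
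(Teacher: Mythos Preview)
The paper does not supply a proof of this statement: it is quoted verbatim as \cite[Theorem~6]{Bose1965} and used only to conclude that functions satisfying \eqref{eq:w-harmonic} lie in $\mathcal F(1,1,\omega\,dm_d)$. There is therefore nothing in the paper to compare your argument against.

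On its own merits, your proposal is the standard and correct route: the open--closed argument driven by the mean value identity. Your openness step is sound once you note that the small ball $B(y,\rho)$ must be taken inside $B(x_0,r)$ (so that the a.e.\ equality $u=M$ on $\{\omega>0\}\cap B(x_0,r)$ feeds directly into the average at $y$); you gesture at this with ``small radius'' but should say it explicitly. The one genuine assumption you are tacitly using is that every ball in $\Omega$ carries positive $\omega$-mass, since otherwise \eqref{eq:w-harmonic} is ill-posed and your chaining breaks; Bose works under hypotheses guaranteeing this, so invoking his setting (as you already do for continuity) is the right move. With those two clarifications your argument is complete.
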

This tells us that if $\omega$ is a measure satisfying Condition (\ref{eq:measure}), then every function satisfying (\ref{eq:w-harmonic}) belongs to $\mathcal F(1,1,\omega \cdot dm_d)$.

More interestingly, he showed that if $\omega$ is an eigenfunction of the laplacian, i.e., there exists $\lambda\in\R$ so that
$$
\Delta\omega+\lambda\omega=0,
$$
then $u\in C^2$ satisfies (\ref{eq:w-harmonic}) if and only if it is a solution of the equation
$$
\omega\cdot\Delta u+2\binr{\nabla u}{\nabla w}=0.
$$
For more information see \cite[Theorem 5]{Bose1965}. In this case, the characterization of functions satisfying an $\omega$-weighted mean value theorem, as described in (\ref{eq:w-harmonic}), can give rise to defining $\omega$-subharmonic functions, as sub-solutions of the equation above. These will also belong to $\mathcal F(1,1,\omega\cdot dm_d)$ and Theorem \ref{thm:quasi-lin-subsol} could also be applied to them.

If $\omega$ is not an eigenfunction of the laplacian, then every function satisfying (\ref{eq:w-harmonic}) will {still} be a solution of the equation above, but there are solutions of the equation above, which do not satisfy (\ref{eq:w-harmonic}). For more information see \cite[Theorem 4]{Bose1965} and the remark that follows the proof.


\subsection{Sub-solutions of quasilinear elliptic equations}
The last class of examples we would like to mention, is where $B>1$ and the measure $\mu$ is not necessarily Lebesgue's measure.

Let $A_p{,\; 1< p<\infty, }$ denote the Muckenhoupt class which consists of all nonnegative locally integrable functions $\omega$ in $\R^d$ satisfying
$$
\underset{B\text{ ball}}\sup \bb{\frac 1{m_d(B)}\int_B\omega(x) dx}\bb{\frac 1{m_d(B)}\int_B\omega(x)^{\frac1{1-p}} dx}^{p-1}:=c_{p,\omega}<\infty.
$$
If $\omega\in A_p$ then it is also $p$-admissible with the same $p$. (See \cite[p.10]{Heinonen}).

Let $A:\R^d\times\R^d\rightarrow\R^d$ be a mapping satisfying that
\begin{enumerate}
\item
\begin{enumerate}
\item The mapping $x\mapsto A(x,\xi)$ is measurable for all $\xi\in\R^d$.
\item The mapping $\xi\mapsto A(x,\xi)$ is continuous for a.e $x\in\R^d$.
\end{enumerate}
\item There exists $\alpha>0$ so that for all $\xi\in\R^d$ and a.e. $x\in\R^d$ we have
$$
\binr{A(x,\xi)}\xi\ge\alpha\cdot\omega(x)\abs\xi^p.
$$
\item There exists $\beta>0$ so that for all $\xi\in\R^d$ and a.e. $x\in\R^d$ we have
$$
\abs{A(x,\xi)}\le\beta\cdot\omega(x)\abs\xi^{p-1}.
$$
\item For every $\xi_1\neq\xi_2$, $\binr{A(x,\xi_1)-A(x,\xi_2)}{\xi_1-\xi_2}>0$.
\item For every $\lambda\in\R, \lambda\neq 0$, $A(x,\lambda\xi)=\lambda\abs\lambda^{p-2}A(x,\xi)$.
\end{enumerate}
For a mapping satisfying properties 1-5 above we look at the quasilinear elliptic equation
\begin{equation*}
\tag{$\cancer$} -div\bb{A(x,\nabla u)}=0. \label{eq:quasilinear}
\end{equation*}

Let $\Omega\subset\R^d$ be a bounded domain. A function $u$ is called {\it $A$-harmonic in $\Omega$} if it is a continuous weak solution for (\ref{eq:quasilinear}) in $\Omega$. A function $u$ is called {\it $A$-subharmonic in $\Omega$} if 
\begin{enumerate}[label=(\alph*)]
\item It is upper semicontinuous.
\item In every component of $\Omega$, $u\not\equiv -\infty$.
\item For every $D\Subset\Omega$ open, for every $A$-harmonic function $v$ which is continuous on $\overline D$, if $u\le v$ on $\partial D$, then $u\le v$ in $D$.
\end{enumerate}
{This class contains all $p$-subharmonic functions, when $\mathcal A(x,y)=\abs y^{p-2}\cdot y$}.

While subharmonic functions and $A$-subharmonic functions share some properties, there are some very elementary properties of subharmonic functions that require modifications to extend for the more general setup, and others that do not extend at all. For example, if $u,v$ are both $A$-subharmonic it is NOT necessarily the case that $u+v$ is $A$-subharmonic. We list here a partial list of interesting properties:
\begin{enumerate}
\item If $\lambda\ge 0$ and $\tau\in\R$ then for every $A$-subharmonic function $\lambda\cdot u+\tau$ is also $A$-subharmonic.
\item If $u,v$ are both $A$-subharmonic then $\max\bset{u,v}$ is $A$-subharmonic.
\item {\bf A maximum principle:} A non-constant $A$-subharmonic function $u$ cannot attain its supremum inside the domain $\Omega$.
\item Every $A$-subharmonic function is a sub-solution of Equation (\ref{eq:quasilinear}).
\item {\bf A weighted mean-value property:} Let $u$ be a non-negative $A$-subharmonic function in a ball $B$, then
$$
u(x)=ess\limitsup y x u(y)\le C\cdot\frac1{\mu(B)}\underset B\int u d\mu,
$$
where $\mu$ is the measure associated with the weight $\omega$, i.e. $\mu(E):=\underset E\int \omega(x)dx$, and $C$ is a constant which depends on $\alpha,\;\beta, \;p,\; d$ and the constant that controls the Poincare-type inequality the measure $\mu$ satisfies (see \cite[Lemma 3.44]{Heinonen}).
\end{enumerate}

To see that $\mu$ satisfies Condition (\ref{eq:measure}) we refer the reader to the following lemma:
\begin{lem}\label{lem:doubling}\cite[Lemma 15.8, p. 299]{Heinonen}
If $\omega\in A_p$, then there exists $q\in(0,1)$ and $c>1$ both depending only on $n,p,\; c_{p,\omega}$ so that for every ball $B$ and every measurable set $E\subset B$
$$
\frac{\mu(E)}{\mu(B)}\le c\bb{\frac{m_d(E)}{m_d(B)}}^q.
$$
\end{lem}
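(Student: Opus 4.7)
My plan is to deduce this inequality from the classical reverse Hölder inequality for $A_p$ weights. The key input is the following self-improvement property: if $\omega \in A_p$, then there exist $s>1$ and $C_1 \ge 1$, both depending only on $d$, $p$, and $c_{p,\omega}$, such that for every ball $B \subset \R^d$,
$$\left(\frac{1}{m_d(B)}\int_B \omega^s\, dm_d\right)^{1/s} \;\le\; \frac{C_1}{m_d(B)}\int_B \omega\, dm_d.$$
This estimate is the heart of the matter. It is typically established through a good-$\lambda$ argument combined with a Calderón--Zygmund decomposition: one decomposes the level set $\{\omega > t\}$ inside $B$ into maximal dyadic subcubes $Q_j$, and the $A_p$ condition forces the proportion of each $Q_j$ on which $\omega > \lambda t$ (for $\lambda$ large enough) to be strictly less than $1$; iterating and summing yields higher integrability of $\omega$.

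Granting the reverse Hölder inequality, the lemma follows by a direct Hölder estimate. For any measurable $E \subset B$, Hölder's inequality with exponents $s$ and $s/(s-1)$ gives
$$\mu(E) \;=\; \int_E \omega\, dm_d \;\le\; \left(\int_E \omega^s\, dm_d\right)^{1/s} m_d(E)^{(s-1)/s} \;\le\; \left(\int_B \omega^s\, dm_d\right)^{1/s} m_d(E)^{(s-1)/s}.$$
Inserting the reverse Hölder bound and rearranging,
$$\mu(E) \;\le\; C_1 \, \mu(B) \cdot m_d(B)^{1/s-1} \cdot m_d(E)^{(s-1)/s} \;=\; C_1 \, \mu(B) \left(\frac{m_d(E)}{m_d(B)}\right)^{(s-1)/s},$$
which is the desired inequality with exponent $q := (s-1)/s \in (0,1)$ and constant $c := C_1$, both depending only on $d$, $p$, and $c_{p,\omega}$.

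The main obstacle is the reverse Hölder inequality itself; everything after that is mechanical. It is worth remarking that one cannot circumvent this step: using only the $A_p$ condition, a symmetric application of Hölder's inequality to the splitting $1 = \omega^{1/p}\cdot\omega^{-1/p}$ on $E$ produces the bound $\mu(E)/\mu(B) \ge c_{p,\omega}^{-1}(m_d(E)/m_d(B))^p$, which goes in the \emph{opposite} direction and only controls $\mu(E)$ from below. The genuine gain of integrability furnished by the reverse Hölder inequality is precisely what allows one to flip the direction of the comparison and obtain the sublinear exponent $q<1$ needed for Condition \eqref{eq:measure}.
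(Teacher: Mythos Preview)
Your argument is correct and is exactly the standard route to this inequality: reverse H\"older (Gehring/Coifman--Fefferman) for $A_p$ weights, followed by a one-line application of H\"older's inequality with exponents $s$ and $s'=s/(s-1)$, giving $q=(s-1)/s$. The computation you wrote is clean and the constants track the right dependencies.

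There is nothing to compare against in this paper, however: the lemma is not proved here but simply quoted from \cite[Lemma~15.8]{Heinonen}. For what it is worth, the proof in that reference proceeds along the same lines you describe, so your proposal matches the cited source.
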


\begin{obs}
For every $A$-subharmonic function, $u$ there {exist} constants $c,B>1, q\in\bb{0,1}$ so that $u\in\mathcal F(1,B,\mu)$ for some measure $\mu\in\mathcal M_\psi(\R^d)$ where $\psi(t):=c\cdot t^q$.
\end{obs}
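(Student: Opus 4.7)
The plan is to verify the three ingredients of the definition of $\mathcal{F}(1,B,\mu)$: that $\mu$ lies in $\mathcal{M}_\psi(\R^d)$ with $\psi(t)=c\cdot t^q$, that $u$ satisfies the weak maximum principle with constant $1$, and that $u$ satisfies the weighted mean-value inequality with some constant $B$. Upper semi-continuity of $u$ is built into the definition of $A$-subharmonicity (condition (a) in the definition), so nothing needs to be checked there.

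The measure side is the easiest step. Since $\omega\in A_p$, Lemma \ref{lem:doubling} immediately gives $\mu\in\mathcal{M}_\psi(\R^d)$ with $\psi(t)=c\cdot t^q$, where the constants $c>1$ and $q\in(0,1)$ depend only on $d$, $p$, and $c_{p,\omega}$; monotonicity, continuity, and $\psi(0^+)=0$ are immediate from this formula. The maximum principle for $\mathcal{F}(\cdot,\cdot,\cdot)$ with constant $A=1$ is a direct consequence of Property 3: a non-constant $A$-subharmonic function cannot attain its supremum in the interior of a domain. Applying this on an open neighbourhood of a convex $K\subset\R^d$, together with upper semi-continuity, yields $\sup_K u\le\sup_{\partial K}u$, which is exactly Condition (\ref{eq:max_pric}) with $A=1$.

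The remaining and slightly more delicate ingredient is the weighted mean value. I would invoke Property 5 (i.e.\ \cite[Lemma 3.44]{Heinonen}) directly: for every non-negative $A$-subharmonic function and every ball $B(x,r)$,
$$
u(x)=\operatorname{ess}\limsup_{y\to x}u(y)\le C\cdot\frac1{\mu(B(x,r))}\int_{B(x,r)}u\,d\mu,
$$
with $C=C(\alpha,\beta,p,d,c_{p,\omega})$, and set $B:=C$ to match Condition (\ref{eq:mean_val}). For signed $A$-subharmonic $u$ one passes to $u^+:=\max(u,0)$, which is again $A$-subharmonic by Property 2 and now non-negative; since the mean-value estimate is ultimately applied in the proof of Theorem \ref{thm:quasi-lin-subsol} only at points $x_j$ where $u(x_j)\ge\frac1A M_u(Q_{k_j})>0$, this reduction is harmless.

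Combining these three checks identifies $u\in\mathcal{F}(1,B,\mu)$ with $B=C$ and $\mu\in\mathcal{M}_\psi(\R^d)$ for $\psi(t)=c\cdot t^q$, as claimed. The only point requiring a small observation beyond simply quoting the listed properties is the signed case in the mean-value step; everything else is an immediate application of Property 3, Property 5, and Lemma \ref{lem:doubling}.
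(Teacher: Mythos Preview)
Your argument is essentially identical to the paper's: both invoke Property 3 for the maximum principle with constant $1$, Lemma \ref{lem:doubling} for the measure condition $\mu\in\mathcal M_\psi(\R^d)$ with $\psi(t)=c\,t^q$, and Property 5 for the weighted mean-value inequality with $B=C$. The one difference is that you explicitly address the non-negativity hypothesis in Property 5 by passing to $u^+=\max(u,0)$ and noting this is harmless for the intended application, whereas the paper's proof simply cites Property 5 without comment on signs; your extra care is a reasonable addition rather than a different route.
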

\begin{proof}
Following Property 3, every $A$-subharmonic function satisfies the maximum principle , and therefore Condition (\ref{eq:max_pric}) holds with the constant $A=1$. Next, following Lemma \ref{lem:doubling}, the measure $\mu$ satisfies Condition (\ref{eq:measure}) with the function $\psi(t):=c\cdot t^q$ for the constant $c>1,\; q\in\bb{0,1}$ in the lemma above. Lastly, following the mean-value property, Property 5 listed above, the function $u$ satisfies Condition (\ref{eq:mean_val}) with the constant $B=C$ where $C$ is the constant appearing in Property 5, which depends on $\alpha,\beta,\; p,\; d$.
This concludes the proof of the observation.
\end{proof}

\nocite{*}
\bibliographystyle{plain}
\bibliography{}

\bigskip
\noindent A.G.:
Department of Mathematics, Northwestern, Illinois, USA.
\newline{\tt adiglucksam@gmail.com} 

\end{document}